  \newcommand\@dotsep{4.5}
  \def\@tocline#1#2#3#4#5#6#7{\relax
     \ifnum #1>\c@tocdepth 
     \else
     \par \addpenalty\@secpenalty\addvspace{#2}%
     \begingroup \hyphenpenalty\@M
     \@ifempty{#4}{%
     \@tempdima\csname r@tocindent\number#1\endcsname\relax
        }{%
         \@tempdima#4\relax
           }%
      \parindent\z@ \leftskip#3\relax \advance\leftskip\@tempdima\relax
      \rightskip\@pnumwidth plus1em \parfillskip-\@pnumwidth
       #5\leavevmode\hskip-\@tempdima #6\relax
       \leaders\hbox{$\m@th
       \mkern \@dotsep mu\hbox{.}\mkern \@dotsep mu$}\hfill
       \hbox to\@pnumwidth{\@tocpagenum{#7}}\par
       \nobreak
        \endgroup
         \fi}
\begin{document}


\makeatletter
\@addtoreset{figure}{section}
\def\thefigure{\thesection.\@arabic\c@figure}
\def\fps@figure{h,t}
\@addtoreset{table}{bsection}

\def\thetable{\thesection.\@arabic\c@table}
\def\fps@table{h, t}
\@addtoreset{equation}{section}
\def\theequation{
\arabic{equation}}
\makeatother

\newcommand{\bfi}{\bfseries\itshape}

\newtheorem{theorem}{Theorem}
\newtheorem{corollary}[theorem]{Corollary}
\newtheorem{definition}[theorem]{Definition}
\newtheorem{example}[theorem]{Example}
\newtheorem{lemma}[theorem]{Lemma}
\newtheorem{notation}[theorem]{Notation}
\newtheorem{problem}[theorem]{Problem}
\newtheorem{proposition}[theorem]{Proposition}
\newtheorem{remark}[theorem]{Remark}

\numberwithin{theorem}{section}
\numberwithin{equation}{section}

\renewcommand{\1}{{\bf 1}}

\newcommand{\Ad}{{\rm Ad}}
\newcommand{\ad}{{\rm ad}}
\newcommand{\Ci}{{\mathcal C}^\infty}

\newcommand{\Aut}{{\rm Aut}}
\newcommand{\ph}{\text{\bf P}}
\newcommand{\de}{{\rm d}}
\newcommand{\con}{\leadsto}
\newcommand{\diag}{{\rm diag}}
\newcommand{\ee}{{\rm e}}
\newcommand{\GL}{{\rm GL}}
\newcommand{\ie}{{\rm i}}
\newcommand{\id}{{\rm id}}
\renewcommand{\Im}{{\rm Im}}
\newcommand{\Ker}{{\rm Ker}\,}
\newcommand{\Lie}{\textbf{L}}
\newcommand{\Op}{{\rm Op}}
\newcommand{\Ran}{{\rm Ran}\,}
\newcommand{\rank}{{\rm rank}\,}
\newcommand{\spann}{{\rm span}}
\renewcommand{\sl}{{\mathfrak s}{\mathfrak l}}
\newcommand{\so}{{\mathfrak s}{\mathfrak o}}
\newcommand{\su}{{\mathfrak s}{\mathfrak u}}

\newcommand{\Tr}{{\rm Tr}\,}

\newcommand{\CC}{{\mathbb C}}
\newcommand{\HH}{{\mathbb H}}
\newcommand{\NN}{{\mathbb N}}
\newcommand{\RR}{{\mathbb R}}

\newcommand{\Ac}{{\mathcal A}}
\newcommand{\Bc}{{\mathcal B}}
\newcommand{\Cc}{{\mathcal C}}
\newcommand{\Dc}{{\mathcal D}}
\newcommand{\Hc}{{\mathcal H}}
\newcommand{\Jc}{{\mathcal J}}
\newcommand{\Kc}{{\mathcal K}}
\renewcommand{\Mc}{{\mathcal M}}
\newcommand{\Sc}{{\mathcal S}}
\newcommand{\Xc}{{\mathcal X}}
\newcommand{\Yc}{{\mathcal Y}}
\newcommand{\Zc}{{\mathcal Z}}
\newcommand{\Ag}{{\mathfrak A}}

\newcommand{\ag}{{\mathfrak a}}
\newcommand{\fg}{{\mathfrak f}}
\renewcommand{\gg}{{\mathfrak g}}
\newcommand{\hg}{{\mathfrak h}}
\renewcommand{\ng}{{\mathfrak n}}
\newcommand{\Sg}{{\mathfrak S}}
\newcommand{\zg}{{\mathfrak z}}


\title[Contractions of Lie algebras with 2-dimensional coadjoint orbits]
{Contractions of Lie algebras with 2-dimensional generic coadjoint orbits}
\author{Daniel Belti\c t\u a}
\author{Benjamin Cahen}
\date{\today}
\address{Institute of Mathematics 
of the Romanian Academy,
P.O. Box 1-764, Bucharest, Romania}
\email{beltita@gmail.com, Daniel.Beltita@imar.ro}
\address{Laboratoire de Math\'ematiques et Applications de Metz, UMR 7122, 
Universit\'e de Lorraine (campus de Metz) et CNRS,
B\^at. A, Ile du Saulcy, F-57045 Metz Cedex 1, France}
\email{benjamin.cahen@univ-lorraine.fr}
\thanks{The research of the first-named author has been partially supported by the Grant
of the Romanian National Authority for Scientific Research, CNCS-UEFISCDI,
project number PN-II-ID-PCE-2011-3-0131.}
\keywords{contraction; Lie algebra; coadjoint orbit}
\subjclass[2010]{Primary 17B05; Secondary 17B08, 17B81}

\begin{abstract}
We determine all the contractions within the class of finite-dim\-ension\-al real Lie algebras 
whose coadjoint orbits have dimensions~$\le2$.
\end{abstract}


\maketitle


\section{Introduction}

The notion of contraction of Lie algebras was introduced on physical grounds by Segal \cite{Se51}, In\"on\"u  and Wigner
\cite{IW53}: 
If two physical theories are related by a limiting process then the associated invariance groups should also be related
by some limiting process called contraction. 
For instance, classical mechanics is a limiting case of relativistic mechanics and then the Galilei group
is a contraction of the Poincar\'e group.

Contractions of Lie algebras have been investigated by many authors \cite{Sa61},
\cite{He66}, \cite{LeN67}, \cite{We91} and continue to be a subject of active interest, particularly
in connection with the somewhat inverse problem of deforming Lie algebras \cite{FM05},
\cite{Bu07}. 
Note that contractions not only link two Lie algebras but also link some objects related to these Lie algebras
such as representations, invariants, special functions and quantization mappings \cite{MN72},
\cite{DR85}, \cite{CW99},\cite{Cp07}, 
\cite{Ca09}, 
and also coadjoint orbits, which provide the motivation for the present paper, 
as we will explain directly, below.

The coadjoint orbits of any Lie group $G$ admit $G$-invariant symplectic structures, 
and may be regarded as phase spaces acted on by the group $G$ in a Hamiltonian fashion, 
in the sense of classical mechanics. 
That well-known observation allows us to regard the Lie groups with 
2-dimensional coadjoint orbits 
as symmetry groups of the simplest nontrivial phase spaces, in some sense. 
Therefore it is natural to wonder which ones of these symmetry groups 
can be further contracted. 
That is precisely the question which we answer in the present paper 
(see Theorem~\ref{main}), 
by considering contractions on Lie algebra level (Definitions \ref{contr0_def}~and~\ref{contr_def}). 

Note that contractions of any of the aforementioned symmetry groups of the simplest nontrivial phase spaces 
necessarily belong to the same class of simplest symmetry groups.  
More precisely, for any finite-dimensional real Lie algebra~$\gg$ associated with the simply connected Lie group~$G$, 
the maximal dimension of the coadjoint orbits of $G$ is an isomorphism invariant that does not increase 
for any contraction of $\gg$ (see Lemma~\ref{NP} below). 
Thus, 
our results can also be regarded as a contribution to understanding 
the contraction relationships within particular classes of Lie algebras. 
Here are some samples of problems that were earlier raised on such relationships 
within various classes of Lie algebras: 
\begin{itemize}
\item Which are the real Lie algebras that do not admit other contractions 
than the abelian Lie algebras and themselves? 
This was answered in \cite{La03}. 
\item Which are all the contractions for low-dimensional Lie algebras? 
This was settled in \cite{NP06} for the Lie algebras of dimensions~$\le 4$. 
\item Is it true that within the class of nilpotent Lie algebras, 
every algebra is a nontrivial contraction of another algebra? 
This is the Grunewald-O'Halloran conjecture (\cite{GO93}) 
which was recently addressed in \cite{HT13}. 
\end{itemize}
It is noteworthy that the class of Lie algebras investigated by us 
(the ones with 2-dimensional generic coadjoint orbits) 
is restricted neither by dimension nor by nilpotency conditions. 

As indicated above, the present investigation was motivated by a question 
that claims its origins in classical physics. 
Therefore the next stage of our research will naturally focus 
on answering the similar question on the quantum level, 
that is, by studying the contractions of the unitary representations 
obtained by the quantization of the simplest phase spaces. 
In other words, from the mathematical point of view of the method of coadjoint orbits, 
our results raise several interesting problems related to 
the contractions of unitary irreducible representations of the groups 
with coadjoint orbits of dimensions~$\le 2$. 
That is a broad topic which was already treated in \cite{Ca03} and \cite{Ca04} 
for some particularly important situations, 
and we plan to address it systematically in future papers.

The present paper is organized as follows. 
In Section~\ref{results} we state the contraction problem to be addressed 
and we also fix some terminology and state our main results 
as Theorem~\ref{main}, thereby providing a complete answer to that problem. 
Section~\ref{Sect3} then collects some auxiliary facts on contractions. 
Section~\ref{Sect3} has a rather technical character 
and are devoted to the proof of Theorem~\ref{main}(\eqref{main_item1}--\eqref{main_item2}). 
Finally, the proof of that theorem is completed in Section~\ref{Sect4}, 
which also includes
some additional observations that may be of independent interest.


\section{Main results}\label{results}

Here is  the main problem to be addressed below. 
In its statement we also introduce some notation to be used throughout the present paper.  

\begin{problem}\label{probl1}
\normalfont
One determined in \cite{ACL86} the list of all of the Lie algebras corresponding to 
the connected, simply connected Lie groups whose coadjoint orbits have the dimensions~$\le2$, namely: 
\begin{itemize}
\item[(i)] the simple Lie algebras $\su(2)$ and $\sl(2,\RR)$; 
\item[(ii)] the solvable Lie algebra with a 1-codimensional abelian ideal $\gg_T:=\RR\ltimes_T \ag_n$, 
where $T\colon\ag_n\to\ag_n$ is the linear operator defined by the adjoint action 
of $(1,0)\in\RR\ltimes\ag_n$; 
\item[(iii)] the solvable Lie algebra $\RR T\ltimes\hg_3$, 
where $\hg_3=\spann\{X,Y,Z\}$ is the Heisenberg algebra with $[X,Y]=Z$, 
and\footnote{See the notation $A_{4,8}^{-1}$ and $A_{4,9}^0$ 
in the list of 4-dimensional algebras in \cite[\S VI.B]{NP06}.}
\begin{itemize}
\item either $[T,X]=X$, $[T,Y]=-Y$, $[T,Z]=0$ \hfill(the Lie algebra $A_{4,8}^{-1}$); 
\item or $[T,X]=Y$, $[T,Y]=-X$, $[T,Z]=0$ \hfill(the Lie algebra $A_{4,9}^0$); 
\end{itemize}
\item[(iv)] the 2-step nilpotent Lie algebra  
$\ng_{3,3}=\spann\{X_1,X_2,X_3,Y_1,Y_2,Y_3\}$ with $[X_1,X_2]=Y_1$, $[X_2,X_3]=Y_2$, $[X_3,X_1]=Y_3$ 
(the free 2-step nilpotent Lie algebra of rank~3); 
\item[(v)] the 3-step nilpotent Lie algebra $\ng_{2,1,2}=\spann\{X_1,X_2,X_3,Y_1,Y_2\}$ with $[X_1,X_2]=X_3$, $[X_1,X_3]=Y_1$, $[X_2,X_3]=Y_2$;  
\end{itemize}
and moreover any direct sums of the above Lie algebras with abelian Lie algebras. 

Which ones of these Lie algebras are contractions of other Lie algebras from the above list?  
\end{problem}

Let us recall the definition of a contraction of Lie algebras.

\begin{definition}\label{contr0_def}
\normalfont 
Let $\gg$ and $\gg_0$ be finite-dimensional real Lie algebras. 
We say that $\gg_0$ is a \emph{contraction} of the Lie algebra $\gg$, and we write $\gg\con\gg_0$, 
if there exists a family of invertible linear maps $\{C_r\colon\gg_0\to\gg\}_{r\in I}$ parameterized 
by the set $I\subseteq\RR$ for which $0\in\RR$ is an accumulation point and such that 
for all $x,y\in\gg_0$ we have 
$[x,y]_{\gg_0}=\lim\limits_{I\ni r\to 0}C_r^{-1}[C_rx,C_ry]_{\gg}$. 
\end{definition}

In order to describe our answer to the above problem, 
it is convenient to introduce a notion of contraction between Lie algebras 
which may not have the same dimension. 
This notion is well defined and recovers the classical notion of contraction 
in the case of Lie algebras having the same dimension, 
as proved in Proposition~\ref{18sept2013} below. 

\begin{definition}\label{contr_def}
\normalfont 
Let $\gg$ and $\gg_0$ be finite-dimensional real Lie algebras. 
We say that $\gg_0$ is a \emph{stabilized contraction} of the Lie algebra $\gg$, and we write $\gg\con_s\gg_0$, 
if there exist some integers $k,k_0\ge 0$ with $k+\dim\gg=k_0+\dim\gg_0$ 
for which there exists a family of invertible linear maps $\{C_r\colon\gg_0\times\ag_k\to\gg\times\ag_{k_0}\}_{r\in I}$ parameterized 
by the set $I\subseteq\RR$ for which $0\in\RR$ is an accumulation point and such that 
for all $x,y\in\gg_0\times\ag_{k_0}$ we have 
$[x,y]_{\gg_0\times\ag_{k_0}}=\lim\limits_{I\ni r\to 0}C_r^{-1}[C_rx,C_ry]_{\gg\times\ag_k}$. 
\end{definition}

We can now summarize our main results as the following theorem, 
whose statement Lie algebras that occur in the above Problem~\ref{probl1}, 
as well as a few other Lie algebras introduced in Notation~\ref{algebras} below. 

\begin{theorem}\label{main}
Here are all the contraction relationships that exist among the Lie algebras of the types (i)--(v) 
from Problem~\ref{probl1}.  In Statement (1) we refer to stabilized contractions and in Statement (2) and Statement (3),
we refer to usual contractions.
\begin{enumerate}
\item\label{main_item1} 
Among the Lie algebras of types (i) and (iii)--(v) we have: 
$$\xymatrix{
 &  & \su(2) \ar[dll] \ar[d] \ar[drr]& & \\
 \ng_{3,3}&  & \ng_{2,1,2}& & A_{4,9}^0 \\
 &A_{4,8}^{-1} \ar[ul] \ar[ur]  & &\sl(2,\RR) \ar[ulll] \ar[ul] \ar[ur] \ar[ll]&  
}$$
\item\label{main_item2} 
Any of the Lie algebras of types (i) and (iii)--(v) contracts to Lie algebras of type (ii) as follows: 
\begin{itemize}
\item $\su(2)\con\gg_T\iff\gg_T\in\{\ag_3, \hg_3, A_{3,5}^0\}$
\item $\su(2)\times\RR\con\gg_T\iff\gg_T\in\{\ag_4, \hg_3\times\RR, A_{3,5}^0\times\RR, \fg_4, A_{4,9}^0\}$ 
\item $\sl(2,\RR)\con\gg_T\iff\gg_T\in\{\ag_3, \hg_3, A_{3,4}^{-1}, A_{3,5}^0\}$ 
\item $\sl(2,\RR)\times\RR\con\gg_T\Leftrightarrow\gg_T\in\{\ag_4, \hg_3\times\RR, A_{3,4}^{-1}\times\RR, 
A_{3,5}^0\times\RR, \fg_4, A_{4,9}^0\}$
\item $A_{4,8}^{-1}\con\gg_T 
\iff \gg_T\in\{\ag_4,\ag_1 \times A_{3,4}^{-1}\}$
\item $A_{4,9}^0 \con\gg_T 
\iff \gg_T\in\{\ag_4,\ag_1 \times A_{3,5}^0\}$ 
\item $\ng_{3,3} \con\gg_T 
\iff \gg_T\in\{\ag_6,\ag_3\times \hg_3,\ag_1\times\ng_{1,2,2}\}$
\item $\ng_{2,1,2} \con\gg_T 
\Leftrightarrow \gg_T\in\{\ag_5,\ag_2\times \hg_3,\ng_{1,2,2},\ag_1\times\fg_4\}$
\end{itemize}
\item\label{main_item3} 
The Lie algebras of type (ii) contract to each other as follows: 
$${\mathfrak g}_T\con {\mathfrak g}_S\iff S\in \overline {C(T)\setminus (0)}$$ 
where for any square matrix $T$ 
we denote by $C(T)$ is the set of all nonzero scalar multiples of the matrices in the similarity orbit of $T$, 
while the overline stands for the topological closure. 
\end{enumerate}
\end{theorem}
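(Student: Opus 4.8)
The plan is to treat the three statements through a common mechanism — realizing each asserted contraction as a \emph{degeneration} of structure constants and excluding the remaining ones by semicontinuous invariants — while exploiting the fact that the type~(ii) algebras are encoded by a single matrix. Throughout I would use the orbit-closure picture: for Lie algebras of equal dimension $n$, Definition~\ref{contr0_def} says $\gg\con\gg_0$ precisely when the structure constants of $\gg_0$ lie in the closure of the $\GL_n(\RR)$-orbit of those of $\gg$ in the variety of Lie brackets on $\RR^n$; by Proposition~\ref{18sept2013} the stabilized relation $\con_s$ of Definition~\ref{contr_def} reduces to this after padding both sides with abelian factors so that the dimensions agree. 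The basic tools for the negative (non-existence) assertions are Lemma~\ref{NP} (the maximal coadjoint-orbit dimension cannot increase under a contraction) together with the standard invariants that degenerate in a prescribed direction: the dimensions $\dim[\gg,\gg]$ and $\dim[\gg,[\gg,\gg]]$ of the derived series, the nilpotency class, and $\dim\zg(\gg)$ of the centre.

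Statement~(3) is the analytic core and I would prove it first. The key reduction is that the isomorphism type of $\gg_T=\RR\ltimes_T\ag_n$ depends only on $T$ up to similarity and nonzero rescaling, i.e.\ only on the class $C(T)=\{\lambda\, gTg^{-1} : \lambda\in\RR\setminus\{0\},\ g\in\GL_n(\RR)\}$, because $\ag_n$ is recovered from $\gg_T$ as a canonical abelian ideal and any isomorphism acts on it by some $g$ while rescaling the complementary $\RR$ by $\lambda$. For the implication $S\in\overline{C(T)\setminus(0)}\Rightarrow\gg_T\con\gg_S$ I would write $S=\lim_r\lambda_r g_rTg_r^{-1}$ with each term nonzero and take $C_r$ on $\gg_S=\RR\oplus\ag_n$ to be the block map $\lambda_r\oplus g_r^{-1}$, so that $C_r^{-1}[C_r\cdot,C_r\cdot]_{\gg_T}$ has associated matrix $\lambda_r g_rTg_r^{-1}\to S$ and the bracket converges to that of $\gg_S$. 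For the converse I would argue that a contraction $\gg_T\con\gg_S$ pulls the bracket of $\gg_T$ back to a family of brackets, each isomorphic to $\gg_T$ and hence of the form $\gg_{T_r}$ with $T_r\in C(T)$, whose limit is the bracket of $\gg_S$; continuity of the parameterization $T\mapsto[\,\cdot\,,\cdot\,]_{\gg_T}$ on the type-(ii) stratum then forces $S\in\overline{C(T)}$, while nondegeneracy of each $C_r$ forces $T_r\neq 0$, giving $S\in\overline{C(T)\setminus(0)}$.

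Statements~(1) and~(2) I would then settle by a finite case analysis organized by the few source algebras $\su(2)$, $\sl(2,\RR)$, $A_{4,8}^{-1}$, $A_{4,9}^0$, $\ng_{3,3}$, $\ng_{2,1,2}$. For the \emph{positive} assertions I would produce explicit contracting families: the classical In\"on\"u--Wigner contractions of $\su(2)=\so(3)$ and of $\sl(2,\RR)$ relative to a one-dimensional subalgebra yield the semidirect targets $A_{3,5}^0$, $A_{3,4}^{-1}$ and the Heisenberg algebra $\hg_3$, and the analogous rescaling contractions produce the arrows of the diagram in~(1). Since in~(1) the source and target differ in dimension, I would realize these as \emph{stabilized} contractions by first adjoining the abelian factor $\ag_k$ dictated by $k+\dim\gg=k_0+\dim\gg_0$, turning each arrow into an equal-dimension contraction to which the orbit-closure picture applies; Statement~(2), by contrast, concerns usual contractions, the $\RR$-summands on the source side already equalizing dimensions. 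Each list in~(2) would then be completed not one target at a time but by transitivity of $\con$ combined with Statement~(3): once $\gg\con\gg_{T_0}$ is established for the ``largest'' matrix $T_0$ of the relevant hierarchy, every $\gg_S$ with $S\in\overline{C(T_0)\setminus(0)}$ is automatically a contraction of $\gg$. The \emph{negative} assertions I would dispatch by the invariants above: Lemma~\ref{NP} confines admissible targets to the type-(ii) universe already constrained to have coadjoint orbits of dimension $\le2$, the derived-series dimensions, nilpotency class and centre dimension exclude each remaining candidate, and for targets of type~(ii) the sharp criterion of Statement~(3) decides membership exactly.

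I expect the main obstacle to lie in two places. First, in the converse half of Statement~(3): a contracting family $C_r$ need not preserve the decomposition $\RR\oplus\ag_n$, so the crux is to show that, after absorbing the residual automorphism freedom, the pulled-back brackets genuinely reach the standard form $\gg_{T_r}$ and that their limit is controlled by convergence of the matrices $T_r$ modulo similarity and scaling; this is where the canonical description of the abelian ideal and the behaviour of the map $T\mapsto C(T)$ under closures must be used with care. Second, in the completeness half of~(1)--(2): excluding every non-listed relation amounts to verifying that the battery of semicontinuous invariants is fine enough to separate the finitely many candidate pairs, and in the residual coincidences — typically two targets of equal dimension sharing all coarse invariants, one semidirect and one nilpotent — reachability must be decided individually, either by producing an explicit contracting family or by invoking the matrix criterion of Statement~(3) for the type-(ii) targets.
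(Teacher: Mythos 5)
Your overall strategy coincides with the paper's (explicit contracting families plus semicontinuous invariants for Statements (1)--(2); a matrix analysis of the block structure for Statement (3)), but the proposal has a genuine gap precisely at the point you yourself flag as ``the crux,'' and flagging it is not the same as closing it. In the converse half of Statement (3) you assert that the pulled-back brackets $\mu_r:=C_r^{-1}[C_r\cdot,C_r\cdot]_{\gg_T}$ are ``of the form $\gg_{T_r}$ with $T_r\in C(T)$.'' This is false in general: $\mu_r$ is merely a bracket on $\RR^{n+1}$ isomorphic to $\gg_T$, and nothing forces its abelian hyperplane ideal into the standard position $\{0\}\times\ag_n$; only brackets in standard position can be written as $\gg_{T_r}$, so the map to which you want to apply ``continuity of the parameterization $T\mapsto[\cdot,\cdot]_{\gg_T}$'' is simply not defined on the family $\mu_r$. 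Your supporting claim that $\ag_n$ is recovered from $\gg_T$ as a \emph{canonical} abelian ideal is also false: for $\gg_T=\hg_3$ every hyperplane containing the center is an abelian ideal, so isomorphisms need not respect the splitting $\RR\oplus\ag_n$ at all. The paper closes exactly this gap by a block-matrix reduction: write
$C_r=\begin{pmatrix} a_r & b_r^t\\ c_r & A_r\end{pmatrix}$,
use the automorphisms
$E(u)=\begin{pmatrix} 1 & 0\\ u & I_n\end{pmatrix}$
to annihilate the block $c_r$ whenever $A_r$ is invertible, perturb $C_r$ by $\varepsilon_r I_{n+1}$ to force invertibility of $A_r$ otherwise, and then extract the identity $\lim_{r\to 0}a_r'A_r^{-1}TA_rv=Sv$, which yields $S\in\overline{C(T)\setminus(0)}$ directly. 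Some such reduction (or an equivalent argument handling non-split $C_r$) is indispensable; without it Statement (3), which you rightly call the analytic core, is unproved.

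For Statements (1)--(2) the same criticism applies at the level of content rather than of ideas: the positive arrows require explicit families (the paper exhibits concrete matrices $C_r$ for, e.g., $\su(2)\times\RR^3\con\ng_{3,3}$ and $A_{4,8}^{-1}\times\RR\con\ng_{2,1,2}$), and the completeness of each list in (2) is not a soft consequence of a ``battery of invariants'': it rests on a Jordan-form analysis of the admissible target matrices $T$ under the constraints $\dim(\Ran T)\le 3$, $\dim(\Ker T)\ge n-2$, unimodularity ($\Tr T=0$), and nilpotency ($T^m=0$ for nilpotent sources), supplemented by Killing-form signature computations (via $K_T((1,0),(1,0))=\Tr(T^2)$) to decide, for instance, that $A_{4,8}^{-1}$ reaches $A_{3,4}^{-1}\times\RR$ but not $A_{3,5}^0\times\RR$, while $A_{4,9}^0$ does the opposite. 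Also note that Statement (3) cannot by itself ``decide membership exactly'' for type (ii) targets of sources of types (i) and (iii)--(v), since it only governs contractions whose \emph{source} is of type (ii); transitivity gives you the positive inclusions but the exclusions still need the invariant-plus-Jordan analysis above. As it stands, your text is a correct outline of the paper's route with its two load-bearing steps left open.
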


The proof of Assertions \eqref{main_item1}--\eqref{main_item2} of the above theorem will be given in Section~\ref{Sect4}, 
and Assertion~\eqref{main_item3} will be proved in Section~\ref{Case5}.  

Here is the list of the Lie algebras that occur in the statement of Theorem~\ref{main} 
and which were not introduced in Problem~\ref{probl1}: 

\begin{notation}\label{algebras}
\normalfont
We use the following notation for $n\ge1$: 
\begin{itemize}
\item $\hg_{2n+1}$ is the $(2n+1)$-dimensional Heisenberg algebra, 
which can be described as the Lie algebra with a basis $X_1,\dots,X_n,Y_1,\dots,Y_n,Z$ 
and the Lie bracket defined by $[X_j,Y_j]=Z$ for $j=1,\dots,n$. 
\item $\fg_{n+2}$ is the $(n+2)$-dimensional filiform Lie algebra, 
which is the Lie algebra with a basis $X_0,\dots,X_n,Y$ and the Lie bracket defined by 
$[Y,X_j]=X_{j-1}$ for $j=1,\dots,n$. 
\item $\ag_n$ is the $n$-dimensional abelian Lie algebra. 
We also define $\ag_0=\{0\}$.  
\end{itemize}
Moreover we use the following Lie algebras: 
\begin{itemize}
\item  $A_{3,5}^0$ is the 3-dimensional Lie algebra (cf. \cite[\S VI.A]{NP06}) 
defined by the commutation relations 
$[e_1,e_3]=-e_2$, $[e_2,e_3]=e_1$.
\item $\ng_{1,2,2}$ is the 5-dimen\-sional 2-step nilpotent Lie algebra 
defined by the commutation relations 
$[e_1,e_2]=e_4$, $[e_1,e_3]=e_5$.
\end{itemize}
\end{notation}

\section{Preliminaries on contractions of Lie algebras}\label{Sect3}

\begin{notation}
\normalfont 
For every finite-dimensional real Lie algebra $\gg$ we will use the following notation: 
\begin{itemize}
\item the Lie bracket $[\cdot,\cdot]_{\gg}\colon\gg\times\gg\to\gg$; 
\item the dual space $\gg^*:=\{\xi\colon\gg\to\RR\mid\xi\text{ is linear}\}$; 
\item the duality pairing 
$\langle\cdot,\cdot\rangle\colon\gg^*\times\gg\to\RR$
defined by $\langle\xi,x\rangle=\xi(x)$ for all $\xi\in\gg^*$ and $x\in\gg$; 
\item for every $\xi\in\gg^*$ we define 
$$B^{\gg}_\xi\colon\gg\times\gg\to\RR, \quad B^{\gg}_\xi(x,y)=\langle\xi,[x,y]_{\gg}\rangle=\langle(\ad^*_{\gg}y)\xi,x\rangle$$ 
where $\ad^*_{\gg}\colon\gg\times\gg^*\to\gg^*$ is the infinitesimal coadjoint action;
\item $\rank(\ad^*_{\gg}):=\max\{\rank  B^{\gg}_\xi\mid\xi\in\gg^*\}$.  
\end{itemize}
\end{notation}

\begin{remark}
\normalfont 
If $G$ is any Lie group whose Lie algebra is $\gg$, then $\rank(\ad^*_{\gg})$ 
is the maximum of the dimensions of the coadjoint orbits of~$G$. 
\end{remark}

\begin{lemma}\label{NP}
If  $\gg\con\gg_0$, 
then $\rank(\ad^*_{\gg_0})\le\rank(\ad^*_{\gg})$. 
\end{lemma}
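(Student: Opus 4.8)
The plan is to reduce everything to a statement about the bilinear forms $B^{\gg}_\xi$ and to exploit that the maximal rank of such a form cannot jump up in the limit. Concretely, I would show that for \emph{every} $\xi\in\gg_0^*$ the form $B^{\gg_0}_\xi$ satisfies $\rank B^{\gg_0}_\xi\le\rank(\ad^*_{\gg})$; taking the maximum over $\xi$ then gives the claim directly from the definition of $\rank(\ad^*_{\gg_0})$. To begin, I would fix $\xi\in\gg_0^*$ and arbitrary $x,y\in\gg_0$, and pass the continuous functional $\langle\xi,\cdot\rangle$ through the limit appearing in Definition~\ref{contr0_def}, getting
\[
B^{\gg_0}_\xi(x,y)=\langle\xi,[x,y]_{\gg_0}\rangle=\lim_{I\ni r\to0}\langle\xi,C_r^{-1}[C_rx,C_ry]_{\gg}\rangle .
\]

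The next step is to transfer $C_r^{-1}$ from the vector onto the functional. Setting $\eta_r:=(C_r^{-1})^*\xi\in\gg^*$, the defining property of the transpose gives $\langle\xi,C_r^{-1}z\rangle=\langle\eta_r,z\rangle$ for all $z\in\gg$, so that
\[
B^{\gg_0}_\xi(x,y)=\lim_{I\ni r\to0}\langle\eta_r,[C_rx,C_ry]_{\gg}\rangle=\lim_{I\ni r\to0}B^{\gg}_{\eta_r}(C_rx,C_ry).
\]
In other words, $B^{\gg_0}_\xi$ is the pointwise limit, as $r\to0$, of the family of bilinear forms $(x,y)\mapsto B^{\gg}_{\eta_r}(C_rx,C_ry)$ on $\gg_0$.

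The purpose of this reformulation is a \emph{uniform} rank bound on the approximating family. For each $r$ the map $C_r$ is a linear isomorphism, so the form $(x,y)\mapsto B^{\gg}_{\eta_r}(C_rx,C_ry)$ is the pullback of $B^{\gg}_{\eta_r}$ along $C_r$ and hence has exactly the same rank as $B^{\gg}_{\eta_r}$. By the definition of $\rank(\ad^*_{\gg})$ as the maximum of $\rank B^{\gg}_\eta$ over $\eta\in\gg^*$, that rank is $\le\rank(\ad^*_{\gg})$ for every $r$. It is worth stressing that we need neither convergence nor boundedness of the family $\{\eta_r\}$ as $r\to0$: the bound holds for each fixed $r$ no matter how $\eta_r$ behaves.

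Finally I would invoke the lower semicontinuity of rank. Fixing bases of $\gg_0$ and $\gg_0^*$, the condition that a bilinear form on $\gg_0$ have rank $\le m$ is closed, since it amounts to the simultaneous vanishing of all $(m+1)\times(m+1)$ minors of the representing matrix, which are polynomial—hence continuous—functions of the entries. Since every member of the approximating family lies in this closed set with $m=\rank(\ad^*_{\gg})$, the limit $B^{\gg_0}_\xi$ lies in it as well, i.e.\ $\rank B^{\gg_0}_\xi\le\rank(\ad^*_{\gg})$. Taking the maximum over $\xi\in\gg_0^*$ yields $\rank(\ad^*_{\gg_0})\le\rank(\ad^*_{\gg})$. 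The only nonroutine ingredient is this semicontinuity step; the remainder is bookkeeping with the transpose identity $\langle\xi,C_r^{-1}z\rangle=\langle(C_r^{-1})^*\xi,z\rangle$ and the rank-invariance of pullback along an isomorphism.
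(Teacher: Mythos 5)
Your argument is correct and complete. The chain of identities $B^{\gg_0}_\xi(x,y)=\lim_r B^{\gg}_{\eta_r}(C_rx,C_ry)$ with $\eta_r=(C_r^{-1})^*\xi$ is valid, the pullback of a bilinear form along the isomorphism $C_r$ indeed preserves rank (its Gram matrix is congruent to that of $B^{\gg}_{\eta_r}$), and your observation that no control on the family $\{\eta_r\}$ is needed is exactly the right point to stress. The closing step is also sound: pointwise convergence of bilinear forms on a finite-dimensional space is entrywise convergence of their Gram matrices in a fixed basis, and the locus $\{\rank\le m\}$ is Zariski-closed, hence closed, so the limit form cannot exceed rank $\rank(\ad^*_{\gg})$. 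The only difference from the paper is that the paper does not prove the lemma at all; it simply cites \cite[Th.~1(10)]{NP06}, where this semicontinuity of the maximal coadjoint orbit dimension under contractions is established. Your write-up is essentially the standard argument behind that reference, so what you gain is a self-contained proof where the paper delegates to the literature; there is nothing to correct.
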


\begin{proof}
See \cite[Th. 1(10)]{NP06}. 
\end{proof}

\begin{remark}\label{aux}
\normalfont
We record here a few properties of the above families of Lie algebras: 
\begin{enumerate}
\item\label{aux_item1} We have $\hg_3=\fg_3$. 
\item\label{aux_item2} It was proved in \cite[Prop. 5]{ACMP83} that $\rank(\ad^*_{\fg_m})=2$ for every $m\ge 3$. 
\item\label{aux_item3} We have $\rank(\ad^*_{\hg_{2n+1}})=2n$ for every $n\ge 1$. 
\item\label{aux_item4} It follows by \cite[Th. 5.2]{La03} (see also \cite[Th. 1(i)]{Go91}) 
that if $\gg$ is an $m$-dimensional nilpotent Lie algebra 
such that $\gg\con\gg_0$ if and only if $\gg_0=\ag_m$, then 
$\gg=\hg_3\times\ag_{m-3}$. 
\end{enumerate}
\end{remark}

\begin{proposition}\label{prop1}
Let $m\ge n\ge 3$ be any positive integers and assume that $n$ is odd. 
Then we have $\fg_m\con\hg_n\times\ag_{m-n}$ if and only if $n=3$. 
\end{proposition}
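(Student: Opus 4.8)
The plan is to treat the two implications separately, using the rank invariant from Lemma~\ref{NP} for the forward direction and an explicit diagonal contraction for the converse.

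For the ``only if'' direction I would show that a contraction $\fg_m\con\hg_n\times\ag_{m-n}$ is obstructed by the maximal coadjoint-orbit dimension as soon as $n>3$. First I would record that $\rank(\ad^*_{\fg_m})=2$ for every $m\ge3$, which is exactly the fact quoted in Remark~\ref{aux}\eqref{aux_item2}. Next I would compute the rank of the target algebra. Since $B^{\gg_1\times\gg_2}_{(\xi_1,\xi_2)}$ is block-diagonal with blocks $B^{\gg_1}_{\xi_1}$ and $B^{\gg_2}_{\xi_2}$, its rank is the sum of the two ranks, and maximizing over $\xi_1$ and $\xi_2$ independently yields $\rank(\ad^*_{\gg_1\times\gg_2})=\rank(\ad^*_{\gg_1})+\rank(\ad^*_{\gg_2})$. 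Writing $n=2k+1$ and invoking Remark~\ref{aux}\eqref{aux_item3}, this gives $\rank(\ad^*_{\hg_n\times\ag_{m-n}})=(n-1)+0=n-1$. Lemma~\ref{NP} then forces $n-1\le\rank(\ad^*_{\fg_m})=2$, i.e.\ $n\le3$; combined with the hypothesis $n\ge3$ this gives $n=3$.

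For the ``if'' direction I would exhibit a generalized In\"on\"u--Wigner (diagonal) contraction realizing $\fg_m\con\hg_3\times\ag_{m-3}$. Writing $\fg_m=\spann\{X_0,\dots,X_{m-2},Y\}$ with $[Y,X_j]=X_{j-1}$, I would view the contraction as a scaling of $\fg_m$ and set $C_r(Y)=rY$, $C_r(X_0)=X_0$, and $C_r(X_j)=r^{-1}X_j$ for $j=1,\dots,m-2$. A short computation gives $C_r^{-1}[C_rY,C_rX_j]=r^{\,d+c_j-c_{j-1}}X_{j-1}$ with $(d,c_0,c_1,\dots,c_{m-2})=(1,0,-1,\dots,-1)$, so the exponent equals $0$ for $j=1$ and is strictly positive for $j\ge2$. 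Hence as $r\to0$ only the single bracket $[Y,X_1]=X_0$ survives, and I would identify the limit: $\{Y,X_1,X_0\}$ span a copy of $\hg_3\cong\fg_3$ (Remark~\ref{aux}\eqref{aux_item1}) while the remaining $m-3$ generators $X_2,\dots,X_{m-2}$ become central, giving precisely $\hg_3\times\ag_{m-3}$.

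The only point requiring genuine care is the additivity of $\rank(\ad^*)$ over direct products in the forward direction; once that is in place, the whole ``only if'' implication collapses to the numerical inequality $n-1\le2$ via the invariance of the maximal orbit dimension under contraction. The converse is then entirely routine, being a matter of choosing the weights so that exactly one structure constant of $\fg_m$ is preserved in the limit.
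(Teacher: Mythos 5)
Your proof is correct and follows essentially the same route as the paper: the ``only if'' direction is the identical rank obstruction $\rank(\ad^*_{\hg_n\times\ag_{m-n}})=n-1\le\rank(\ad^*_{\fg_m})=2$ via Lemma~\ref{NP}, and the ``if'' direction is the same kind of diagonal (generalized In\"on\"u--Wigner) contraction, merely with a different but equivalent choice of weights (the paper keeps $Y$ fixed and scales the $X_j$ by $r^{a_j}$ with $a_{m-1}>\cdots>a_1=a_0$). Your explicit justification of the additivity of $\rank(\ad^*)$ over direct products is a small amplification of a step the paper takes for granted, but it does not change the argument.
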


\begin{proof}
We have $\rank(\ad^*_{\hg_n\times\ag_{m-n}})=\rank(\ad^*_{\hg_n})=n-1$ and $\rank(\ad^*_{\fg_m})=2$ 
by Remark~\ref{aux}(\eqref{aux_item2}--\eqref{aux_item3}). 
Therefore, if $\fg_m\con\hg_n\times\ag_{m-n}$, then Lemma~\ref{NP} entails $n-1\le 2$, hence necessarily $n=3$. 

For the converse assertion, let $X_0,\dots,X_{m-1},Y$ be a basis of $\fg_m$ as in Notation~\ref{algebras}. 
For fixed $a_0,\dots,a_{m-1}\in\RR$ 
and for every $r>0$ define a linear map $C_r\colon\fg_m\to\fg_m$, by $C_r(Y)=Y$, $C_r(X_j)=r^{a_j}Xj$ if $0\le j\le m-1$. 
Then we have $C_r^{-1}[C_r(Y),C_r(X_j)]=r^{a_j}C_r^{-1}(X_{j-1})=r^{a_j-a_{j-1}}X_{j-1}$ for $j=1,\dots,m-1$. 
Hence if $a_{m-1}>\cdots>a_1=a_0$ then we obtain 
$$\lim\limits_{r\to 0}C_r^{-1}[C_r(Y),C_r(X_j)]=
\begin{cases}
0 &\text{ if }2\le j\le m-1,\\
X_0 &\text{ if }j=0.
\end{cases}$$
Since $\RR Y+\RR X_1+\RR X_0\simeq \hg_3$, we thus see that $\fg_m\con\hg_3\times\ag_{m-3}$. 
\end{proof}

As a related result we note the following rigidity property of the nilpotent Lie groups with square-integrable representations modulo the center. 

\begin{proposition}
Let $G$ and $G_0$ be nilpotent Lie groups with 1-dimensional centers and with the Lie algebras $\gg$ and $\gg_0$. 
If $G_0$ has square-integrable representations modulo the center and $\gg\con\gg_0$, 
then also $G$ has square integrable representations modulo the center. 
\end{proposition}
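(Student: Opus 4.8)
The plan is to reduce the square-integrability condition to a purely numerical statement about $\rank(\ad^*_\gg)$ via the Moore--Wolf criterion, and then to exploit Lemma~\ref{NP} together with the elementary fact that the center of a Lie algebra always lies in the radical of every form $B^\gg_\xi$.

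First I would recall the Moore--Wolf characterization: a connected, simply connected nilpotent Lie group $G$ with center $Z$ and Lie algebra $\gg$ with center $\zg$ has square-integrable representations modulo~$Z$ if and only if there exists $\xi\in\gg^*$ whose radical $\Ker B^\gg_\xi$ coincides with $\zg$; equivalently, $\rank(\ad^*_\gg)=\dim\gg-\dim\zg$. Since $[\zg,\gg]=0$, one always has $\zg\subseteq\Ker B^\gg_\xi$ for every $\xi$, whence $\rank B^\gg_\xi\le\dim\gg-\dim\zg$ and therefore $\rank(\ad^*_\gg)\le\dim\gg-\dim\zg$ in complete generality. Thus the Moore--Wolf condition is precisely the assertion that this inequality is an equality.

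Next I would record the numerical consequences of the hypotheses. The centers of $\gg$ and $\gg_0$ are both $1$-dimensional, and the contraction $\gg\con\gg_0$ is of the classical type (Definition~\ref{contr0_def}); it therefore uses invertible linear maps and forces $\dim\gg=\dim\gg_0=:d$. Because $G_0$ has square-integrable representations modulo its center, the equality case of Moore--Wolf applied to $\gg_0$ gives $\rank(\ad^*_{\gg_0})=d-1$.

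Finally I would combine these facts by a squeezing argument. By Lemma~\ref{NP} the rank does not increase under contraction, so $d-1=\rank(\ad^*_{\gg_0})\le\rank(\ad^*_\gg)$. On the other hand, the general inequality established in the first step, applied to $\gg$ with its $1$-dimensional center, gives $\rank(\ad^*_\gg)\le d-1$. Hence $\rank(\ad^*_\gg)=d-1=\dim\gg-\dim\zg$, and the equality case of Moore--Wolf shows that $G$ likewise has square-integrable representations modulo its center. I do not anticipate a serious obstacle: the only external input is the Moore--Wolf criterion, and everything else is the observation that $\zg$ lies inside every radical, combined with the monotonicity of $\rank(\ad^*)$ already recorded in Lemma~\ref{NP}.
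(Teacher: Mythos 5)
Your proof is correct and follows essentially the same route as the paper: reduce square-integrability to the Moore--Wolf rank condition $\rank(\ad^*_{\gg})=\dim\gg-1$ and obtain the lower bound from Lemma~\ref{NP}. The only (immaterial) difference is in the matching upper bound: you derive $\rank(\ad^*_{\gg})\le\dim\gg-1$ from the fact that the center lies in the radical of every $B^{\gg}_\xi$, whereas the paper gets it from the parity observation that $\rank(\ad^*_{\gg})$ is an even integer less than the odd number $\dim\gg$.
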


\begin{proof} 
Let $n=\dim\gg$. 
Since the center of $G$ is 1-dimensional, 
it has square integrable representations modulo the center if and only if $\rank(\ad^*_{\gg})=n-1$ 
(see \cite{MW73}). 
For the same reason we have $\rank(\ad^*_{\gg_0})=n-1$, 
and this also shows that the integer $n$ is odd, since the dimension of the coadjoint orbits is always an even integer. 
On the other hand 
Lemma~\ref{NP} ensures that $\rank(\ad^*_{\gg})\ge\rank(\ad^*_{\gg_0})=n-1$,  
hence the conclusion follows since $\rank(\ad^*_{\gg})$ is an even integer less than~$n$. 
\end{proof}

\begin{proposition}\label{18sept2013}
The property described in Definition~\ref{contr_def} does not depend on the choice of the integers $k,k_0\ge 0$. 
\end{proposition}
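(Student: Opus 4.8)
The plan is to reduce the statement to a single cancellation law for a central one-dimensional abelian summand: for any finite-dimensional real Lie algebras $\hg$ and $\hg_0$ one should have $\hg\con\hg_0$ if and only if $\hg\times\ag_1\con\hg_0\times\ag_1$, in the sense of Definition~\ref{contr0_def}. First I would record that any two admissible pairs $(k,k_0)$ and $(k',k_0')$ for a fixed couple $(\gg,\gg_0)$ --- meaning $k,k_0,k',k_0'\ge 0$ with $\dim\gg+k=\dim\gg_0+k_0$ and $\dim\gg+k'=\dim\gg_0+k_0'$ --- necessarily satisfy $k'-k=k_0'-k_0=:d$, since each of these differences equals $\dim\gg_0-\dim\gg$. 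After possibly interchanging the two pairs we may assume $d\ge 0$, and setting $\hg:=\gg\times\ag_k$ and $\hg_0:=\gg_0\times\ag_{k_0}$ we are reduced, by induction on $d$, to comparing $(k,k_0)$ with $(k+1,k_0+1)$; that is, to the equivalence $\hg\con\hg_0\iff\hg\times\ag_1\con\hg_0\times\ag_1$, because $\gg\times\ag_{k+1}=\hg\times\ag_1$ and $\gg\times\ag_{k_0+1}=\hg_0\times\ag_1$.

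The forward implication is routine: if $\{C_r\}_{r\in I}$ realizes $\hg\con\hg_0$, then $\{C_r\oplus\id_{\RR}\}_{r\in I}$ realizes $\hg\times\ag_1\con\hg_0\times\ag_1$, since the adjoined summand is central on both sides and is mapped identically, so it contributes nothing to either bracket and leaves the defining limit unchanged. The substance is therefore the converse cancellation law. Suppose $\{C_r\colon\hg_0\times\RR z_0\to\hg\times\RR z\}_{r\in I}$ realizes $\hg\times\ag_1\con\hg_0\times\ag_1$, where $z,z_0$ generate the adjoined central lines. My strategy is to normalize the family so that $C_r(\RR z_0)\subseteq\RR z$ for every $r$. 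Once this holds, each $C_r$ descends to an invertible linear map $\overline{C_r}$ between the quotients $(\hg_0\times\RR z_0)/\RR z_0\cong\hg_0$ and $(\hg\times\RR z)/\RR z\cong\hg$ --- genuine as Lie algebras, since $\RR z_0$ and $\RR z$ are central ideals --- and, applying the (continuous, bracket-compatible) quotient projections to the limit of Definition~\ref{contr0_def}, one gets $\overline{C_r}^{\,-1}[\overline{C_r}\,\overline{x},\overline{C_r}\,\overline{y}]_{\hg}\to[\overline{x},\overline{y}]_{\hg_0}$, i.e. $\hg\con\hg_0$. This step is clean, so everything is concentrated in the normalization.

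The hard part will be precisely this normalization, namely forcing the image $C_rz_0$ of the distinguished central generator to lie along $z$. The centrality of $z_0$ does supply asymptotic information: from $\lim_{r\to 0}C_r^{-1}[C_rz_0,C_ru]=[z_0,u]_{\hg_0\times\RR z_0}=0$ one sees that $C_rz_0$ must behave, as $r\to 0$, like an element of the center $Z(\hg\times\RR z)=Z(\hg)\oplus\RR z$. The plan is then to exploit the freedom of replacing $\{C_r\}$ by $\{C_r\circ\psi_r\}$ or $\{\theta_r\circ C_r\}$ with $\psi_r,\theta_r$ converging to linear isomorphisms: such a modification alters the limiting bracket only by transport through a fixed Lie-algebra isomorphism, hence preserves the contraction relation up to the (harmless) replacement of the target by an isomorphic copy, and can be used to drive $C_rz_0$ onto $\RR z$.

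The genuinely delicate point --- and the one I would expect to require the most care --- is to guarantee that the limiting direction of $C_rz_0$ has a nonzero component along $z$, rather than degenerating into $Z(\hg)$; only under this condition is that direction conjugate to $z$ by an automorphism of $\hg\times\RR z$, since a nonzero central vector of $\hg\times\RR z$ spans a direct $\ag_1$-factor exactly when its $z$-component is nonzero. To secure this I would pass to the standard description of contractions as degenerations (points in the Euclidean closure of a $\mathrm{GL}$-orbit in the variety of Lie brackets) and invoke a curve-selection argument to choose the realizing family along a semialgebraic arc; this endows $C_rz_0$ and $C_r^{-1}z$ with definite leading asymptotics, from which the relevant central direction can be read off and corrected explicitly, completing the normalization and hence the cancellation law.
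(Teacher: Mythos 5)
Your reduction to the one--step cancellation law $\hg\con\hg_0\iff\hg\times\ag_1\con\hg_0\times\ag_1$ is exactly the paper's, and your mechanism for the hard direction is the natural dual of the paper's: where the paper normalizes the block $c_r\colon\gg_0\to\ag_1$ of $C_r$ to zero so that $C_r$ \emph{restricts} to the hyperplane subalgebra, you normalize the block $b_r$ (the $\hg$-component of $C_rz_0$) to zero so that $C_r$ \emph{descends} to the quotient by the central line; the quotient step itself is correct. The gap is that the normalization is never actually performed, and the two principles you invoke for it are not valid as stated. First, post-composition $\theta_r\circ C_r$ with $\theta_r$ merely converging to a linear isomorphism does \emph{not} change the limiting bracket by transport through a fixed isomorphism: writing $\mu$ for the bracket of $\hg\times\ag_1$, one has $(\theta_rC_r)^{-1}\mu(\theta_rC_rx,\theta_rC_ry)=C_r^{-1}\mu_r(C_rx,C_ry)$ with $\mu_r:=\theta_r^{-1}\mu(\theta_r\cdot,\theta_r\cdot)$, and feeding an $o(1)$ perturbation of $\mu$ into the unbounded operation $\nu\mapsto C_r^{-1}\nu(C_r\cdot,C_r\cdot)$ can destroy convergence altogether. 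Only pre-composition with $\psi_r\to\psi$ invertible, or post-composition with maps that are genuine automorphisms of $(\hg\times\ag_1,\mu)$ for each $r$, is harmless --- and the candidate maps $(w,s)\mapsto(w-(s/d_r)b_r,\,s)$ are automorphisms only when $b_r$ is central in $\hg$, which is precisely what you do not know. Second, ``asymptotic centrality'' of $C_rz_0$ does not follow from $\lim_rC_r^{-1}[C_rz_0,C_ru]=0$: since $C_r^{-1}$ may crush large vectors, this gives no control on $[C_rz_0,C_ru]$ itself, hence none on the distance from the direction of $C_rz_0$ to $\Zc(\hg)\oplus\RR z$. Third, the decisive nondegeneracy (a nonzero $z$-component of the limiting direction, and, if you normalize by pre-composition, the dual requirement that $\psi(z_0)$ still span a direct $\ag_1$-factor of $\hg_0\times\ag_1$) is deferred to an unexecuted curve-selection argument; as you note yourself, if that component vanishes the direction is not conjugate to $z$ under $\Aut(\hg\times\ag_1)$ and the quotient construction collapses.

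By contrast, the paper sidesteps the direction of $C_rz_0$ entirely: it writes $C_r=\bigl(\begin{smallmatrix}A_r&b_r\\ c_r&d_r\end{smallmatrix}\bigr)$, reduces to invertible $A_r$ by a perturbation $C_r+\varepsilon_r\id$, kills $c_r$ by composing with $\bigl(\begin{smallmatrix}\id&0\\ -c_rA_r^{-1}&1\end{smallmatrix}\bigr)$, and then restricts $C_r$ to $\gg_0$, where the defining limit closes up with no appeal to centers or quotients. If you wish to keep your route, the honest cost is a rigorous proof that the family can be modified so that $C_r(\RR z_0)\subseteq\RR z$; at present that normalization is the entire content of the proposition.
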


\begin{proof}
An easy reasoning by induction shows that it suffices to prove the following: 
If $\gg$ and $\gg_0$ are finite-dimensional real Lie algebras 
with $\dim\gg=\dim\gg_0$ and there exists a family of invertible linear maps 
$\{C_r\colon\gg_0\times\ag_1\to\gg_1\times\ag_1\}_{r\in I}$ parameterized 
by the set $I\subseteq\RR$ for which $0\in\RR$ is an accumulation point and 
\begin{equation}\label{18sept2013_proof_eq1}
(\forall x,y\in\gg_0\times\ag_1)\quad  
[x,y]_{\gg_0\times\ag_1}=\lim\limits_{I\ni r\to 0}C_r^{-1}[C_rx,C_ry]_{\gg\times\ag_1}
\end{equation}
then  there also exists a family of invertible linear maps 
$\{A_r\colon\gg_0\to\gg_1\}_{r\in I}$ for which
\begin{equation}\label{18sept2013_proof_eq2}
(\forall x,y\in\gg_0)\quad  
[x,y]_{\gg_0}=\lim\limits_{I\ni r\to 0}A_r^{-1}[A_rx,A_ry]_{\gg}.
\end{equation}
To this end let us write 
$C_r=\begin{pmatrix}
A_r & b_r \\
c_r & d_r 
\end{pmatrix}$
where $A_r\colon\gg_0\to\gg$, $b_r\in\gg$, $c_r\in\gg_0^*$, and $d_r\in\RR$. 

If $c_r=0$, then the invertibility property of $C_r$ 
entails that $A_r$ is invertible and moreover $A_r$ is the restriction of $C_r$ to $\gg_0$. 
Therefore, if $c_r=0$ for $r\in I$ close enough to $0$, then 
\eqref{18sept2013_proof_eq1} implies \eqref{18sept2013_proof_eq2}. 

We now show how the general case can be reduced to the situation that we just discussed. 
First note that  if $A_r$ is invertible then we may define an automorphism of $\gg\times\ag_1$ 
by 
$F_r:=\begin{pmatrix}
\id & 0 \\
-c_rA_r^{-1} & 1
\end{pmatrix}$, 
hence 
we may replace $C_r$ by $F_rC_r$ in order to assume that also $c_r=0$. 

Finally, by replacing a general contraction $C_r$ by a suitable perturbation 
$C_r+\epsilon_r\id$ for some $\varepsilon_r\in\RR$, the general case can be reduced 
to the case of a contraction whose component $A_r$ is invertible, 
and then the above discussion applies.   
\end{proof}

\section{Proof of Theorem~\ref{main}(\eqref{main_item1}--\eqref{main_item2})}\label{Sect4}

In order to prove the theorem, we take into account the 13 possible situations, 
each of them having some subcases: 

\subsection{(i) vs. (i)}\label{Case0}  
\begin{itemize}
\item[(a)] Does $\RR^k\times\su(2)\con\RR^k\times \sl(2,\RR)$ hold true? 
 No, since 
$\RR^k\times\su(2)\not\simeq\RR^k\times \sl(2,\RR)$ while the algebras of derivations 
of these two Lie algebras have the same dimension $3+k^2$, hence we may use \cite[Th. 1(1)]{NP06}. 
\item[(b)] Does $\RR^k\times\sl(2,\RR)\con\RR^k\times \su(2)$ hold true? 
 No, for the same reason as above. 
Alternatively, since the Killing form of $\RR^k\times \su(2)$ has 3 negative eigenvalues 
while the Killing form of $\RR^k\times\sl(2,\RR)$ has only one negative eigenvalue, 
and the number of negative eigenvalues of the Killing form cannot increase by a contraction process; 
see \cite[Th. 1(16)]{NP06}. 
\end{itemize}

\subsection{(i) vs. (ii)}\label{Case1}  
\begin{itemize}
\item[(a)] Does $\RR^{n-2}\times$(i)$\con$(ii) hold true, if $n\ge 2$?  
Since the dimension of the derived algebra cannot increase by a contraction 
(\cite[Th. 1(4)]{NP06}) it follows by \eqref{derived} that necessarily $\dim(\Ran T)\le 3$, 
and then we may assume $n\le 3$. 
All the possible contractions of  $\su(2)$, $\sl(2,\RR)$ (our case $n=2$), 
$\su(2)\times\RR$, $\sl(2,\RR)\times\RR$ (our case $n=3$) were 
determined in \cite[pag. 26--27]{NP06}. 
There are 2 possible situations: 
\\ ({\bfi a1}) The situation involving  $\su(2)$:  
\\ $\bullet$ \fbox{$\su(2)\con\gg_T\iff\gg_T\in\{\ag_3, \hg_3, A_{3,5}^0\}$}
where $A_{3,5}^0$ is defined on $\RR^3$ by the commutation relations \eqref{A350}. 
If we define $\su(2)$ by the commutation relations 
$[e_1,e_2]=e_3,\ [e_2,e_3]=e_1,\ [e_3,e_1]=e_2$
then a contraction $\su(2)\con A_{3,5}^0$ is given for $r\to0+$ by 
$$C_r= 
\begin{pmatrix} 
r & 0 & 0 \\
0 & r & 0 \\
0 & 0 & 1
\end{pmatrix}$$
while the contraction $\su(2)\con\hg_3$ was thoroughly studied in \cite{Ca03}. 
\\ $\bullet$ \fbox{$\su(2)\times\RR\con\gg_T\iff\gg_T\in\{\ag_4, \hg_3\times\RR, A_{3,5}^0\times\RR, \fg_4, A_{4,9}^0\}$}
\\ Here, if we define the filiform algebra $\fg_4$ by the commutation relations 
$[e_2,e_4]=e_1$, $[e_3,e_4]=e_2$ (which is the algebra $A_{4,1}$ on \cite[pag. 20]{NP06}) 
then a contraction $\su(2)\times\RR\con\fg_4$ is given for $r\to 0+$ by 
$$C_r=
\begin{pmatrix}
-1 & 0 & 1 & 0 \\
\hfill 0 & 0 & 0 & 1 \\
\hfill 0 & 1 & 0 & 0 \\
\hfill 0 & 0 & 1 & 0 
\end{pmatrix} 
\begin{pmatrix}
r^3 & 0 & 0 & 0 \\
  0 & r^2 & 0 &0 \\
  0 & 0 & r & 0 \\
  0 & 0 & 0 & r 
\end{pmatrix}
=\begin{pmatrix}
-r^3 & 0 & r & 0 \\
\hfill 0 & 0 & 0 & r \\
\hfill 0 & r^2 & 0 & 0 \\
\hfill 0 & 0   & r & 0 
\end{pmatrix}$$
while a contraction $\su(2)\times\RR\con A_{4,9}^0$ is constructed in the Situation~\ref{Case2}({\bfi a2}) below. 
\\ ({\bfi a2}) The situation involving $\sl(2,\RR)$:   
\\ $\bullet$ \fbox{$\sl(2,\RR)\con\gg_T\iff\gg_T\in\{\ag_3, \hg_3, A_{3,4}^{-1}, A_{3,5}^0\}$}
where the Lie algebras $A_{3,4}^{-1}$ and $A_{3,5}^0$ are defined on $\RR^3$ by the commutation relations \eqref{A34-1} and \eqref{A350}, 
respectively. 
If we define $\sl(2,\RR)$ by the commutation relations 
$[e_1,e_2]=e_1,\ [e_2,e_3]=e_3,\ [e_1,e_3]=2e_2$
then 
a contraction $\sl(2,\RR)\con A_{3,4}^{-1}$ is given for $r\to0+$ by 
$$C_r= 
\begin{pmatrix} 
r & 0 & 0 \\
0 & 0 & 1 \\
0 & -1 & 0
\end{pmatrix}$$
a contraction $\sl(2,\RR)\con A_{3,5}^0$ is given for $r\to0+$ by 
$$C_r= 
\begin{pmatrix} 
0 & 0 & \frac{1}{2} \\
0 & r & 0 \\
r & 0 & \frac{1}{2}
\end{pmatrix}$$
while the contraction $\sl(2,\RR)=\su(1,1)\con\hg_3$ was thoroughly studied in \cite{Ca04}.  
\\ $\bullet$ \fbox{$\sl(2,\RR)\times\RR\con\gg_T\Leftrightarrow\gg_T\in\{\ag_4, \hg_3\times\RR, A_{3,4}^{-1}\times\RR, 
A_{3,5}^0\times\RR, \fg_4, A_{4,9}^0\}$}
\\ Here, if we define the filiform algebra $\fg_4$ by the commutation relations 
$[e_2,e_4]=e_1$, $[e_3,e_4]=e_2$ (which is the algebra $A_{4,1}$ on \cite[pag. 20]{NP06}) 
then a contraction $\sl(2,\RR)\times\RR\con\fg_4$ is given for $r\to 0+$ by 
$$C_r=
\begin{pmatrix}
 0 & 0 & \hfill 0 & 1 \\
 0 & 1 & \hfill 0 & 0 \\
 0 & 0 & -\frac{1}{2} & 0 \\
 1 & 0 & \hfill 0 & 1 
\end{pmatrix} 
\begin{pmatrix}
  r & 0 & 0 & 0 \\
  0 & r & 0 & 0 \\
  0 & 0 & r & 0 \\
  0 & 0 & 0 & 1 
\end{pmatrix}
=\begin{pmatrix}
       0 & 0 & \hfill 0 & 1\\
\hfill 0 & r & \hfill 0 & r \\
\hfill 0 & 0 & -\frac{r}{2} & 0 \\
\hfill r & 0 & \hfill 0 & 1 
\end{pmatrix}$$
while contractions $\sl(2,\RR)\times\con A_{4,9}^0$ and $\sl(2,\RR)\times\con A_{4,8}^{-1}$ 
are constructed in the Situation~\ref{Case2}(({\bfi a3})--({\bfi a4})) below.  

\item[(b)] Does (ii)$\con$$\RR^{n-2}\times$(i) hold true, if $n\ge 2$? 
 No, since the Lie algebra of (ii) is solvable while the algebras of $\RR^{n-2}\times$(i) are not; 
see \cite[Th. 1(13)]{NP06}. 
\end{itemize}
(Note that any Lie algebra of the type $\RR^k\times$(ii) is actually a Lie algebra of type (ii), 
so we need not consider separately the direct sums of~(ii) 
with abelian Lie algebras.)

\subsection{(i) vs. (iii)}\label{Case2}\hfill
\begin{itemize}
\item[(a)] Does $\RR^{k+1}\times$(i)$\con$$\RR^k\times$(iii) hold true?   
There are 4 possible situations: 
\\ ({\bfi a1}) $\RR^{k+1}\times\su(2)\con\RR^k\times A_{4,8}^{-1}$ for which the answer is negative, 
by an argument given in \cite[Rem. 12]{NP06} for $\so(3)$. 
In fact, the Killing form of $\RR^{k+3}\times\su(2)$ has 3 negative eigenvalues and the other $k$ eigenvalues are zero, 
and on the other hand the Killing form of $\RR^k\times A_{4,8}^{-1}$ has one positive eigenvalue 
and the other $k+2$ eigenvalues are zero. 
On the other hand, the number of positive eigenvalues cannot increase by a contraction process, as proved in 
\cite[Th. 1(16)]{NP06}.
\\ ({\bfi a2}) \fbox{$\RR^{k+1}\times\su(2)\con\RR^k\times A_{4,9}^0$} for which the answer is yes, by \cite[\S VIII.B]{NP06}, 
and a contraction $\su(2)\times\RR\con A_{4,9}^0$, 
is given for $r\to 0+$ by 
$$C_r
=\begin{pmatrix}
1 & 0 & 0 & 1 \\
0 & 1 & 0 & 0 \\
0 & 0 & 1 & 0 \\
0 & 0 & 0 & 1
\end{pmatrix}
\begin{pmatrix}
r^2 & 0 & 0 & 0 \\
0 & r & 0 & 0 \\
0 & 0 & r & 0 \\
0 & 0 & 0 & 1
\end{pmatrix}
=\begin{pmatrix}
r^2 & 0 & 0 & 1 \\
0 & r & 0 & 0 \\
0 & 0 & r & 0 \\
0 & 0 & 0 & 1
\end{pmatrix} $$
where $\su(2)\times\RR$ is defined on $\RR^4$ by the commutation relations 
\begin{equation}\label{su2}
[e_1,e_2]=e_3,\ [e_2,e_3]=e_1,\ [e_3,e_1]=e_2,
\end{equation} 
while $A_{4,9}^0$ is defined also on $\RR^4$ 
by the commutation relations 
\begin{equation}\label{A490}
[e_2,e_3]=e_1,\ [e_2,e_4]=-e_3,\ [e_3,e_4]=e_2. 
\end{equation}
\\ ({\bfi a3}) \fbox{$\RR^{k+1}\times\sl(2,\RR)\con\RR^k\times A_{4,8}^{-1}$}
for which the answer is yes, by \cite[\S VIII.B]{NP06}, 
and a contraction $\sl(2,\RR)\times\RR\con A_{4,8}^{-1}$, 
is given for $r\to 0+$ by 
$$C_r
=\begin{pmatrix}
0 & 1 & 0 & \hfill 0 \\
0 & 0 & 0 & \hfill 1 \\
0 & 0 & 1 & \hfill 0 \\
1 & 0 & 0 & -\frac{1}{2}
\end{pmatrix}
\begin{pmatrix}
r & 0 & 0 & 0 \\
0 & 1 & 0 & 0 \\
0 & 0 & r & 0 \\
0 & 0 & 0 & 1
\end{pmatrix}
=\begin{pmatrix}
0 & 1 & 0 & \hfill 0 \\
0 & 0 & 0 & \hfill 1 \\
0 & 0 & r & \hfill 0 \\
r & 0 & 0 & -\frac{1}{2}
\end{pmatrix} $$
where $\sl(2,\RR)\times\RR$ is defined on $\RR^4$ by the commutation relations 
\begin{equation}\label{sl2}
[e_1,e_2]=e_1,\ [e_2,e_3]=e_3,\ [e_1,e_3]=2e_2,
\end{equation} 
while $A_{4,8}^{-1}$ is defined also on $\RR^4$ 
by the commutation relations 
\begin{equation}\label{A48-1}
[e_2,e_3]=e_1,\ [e_2,e_4]=e_2,\ [e_3,e_4]=-e_3. 
\end{equation}
\\ ({\bfi a4}) \fbox{$\RR^{k+1}\times\sl(2,\RR)\con\RR^k\times A_{4,9}^0$} 
for which the answer is yes, by \cite[\S VIII.B]{NP06}, 
and a contraction $\sl(2,\RR)\times\RR\con A_{4,9}^0$, 
is given for $r\to 0+$ by 
$$
C_r
=\begin{pmatrix}
-\frac{1}{2} & \hfill 0 & \hfill \frac{1}{2} & \frac{1}{2} \\
\hfill 0 & 1 & \hfill 0 & 0 \\
-\frac{1}{2} & 0 & -\frac{1}{2} & \frac{1}{2} \\
\hfill 0 & 0 & \hfill 0 & 1
\end{pmatrix}
\begin{pmatrix}
r^2 & 0 & 0 & 0 \\
0 & r & 0 & 0 \\
0 & 0 & r & 0 \\
0 & 0 & 0 & 1
\end{pmatrix} 
=\begin{pmatrix}
-\frac{r^2}{2} & 0 & \hfill \frac{r}{2} & \frac{1}{2} \\
\hfill 0 & r & \hfill 0 & 0 \\
-\frac{r^2}{2} & 0 & -\frac{r}{2} & \frac{1}{2} \\
\hfill 0 & 0 & \hfill 0 & 1
\end{pmatrix}
$$
where $\sl(2,\RR)\times\RR$ is defined on $\RR^4$ by the commutation relations \eqref{sl2} 
while $A_{4,9}^0$ is defined also on $\RR^4$ 
by the commutation relations \eqref{A490}. 
\item[(b)] Does $\RR^k\times$(iii)$\con$$\RR^{k+1}\times$(i) hold true? 
No, since the Lie algebra of $\RR^k\times$(iii) is solvable while the algebras of $\RR^{k+1}\times$(i) are not; 
see \cite[Th. 1(13)]{NP06}. 
\end{itemize}

\subsection{(i) vs. (iv)}\label{Case3}\hfill 
\begin{itemize}
\item[(a)] Does $\RR^{k+3}\times$(i)$\con$$\RR^k\times$(iv) hold true? 
Yes, and there are 2 possible situations: 
\\ ({\bfi a1}) \fbox{$\RR^{k+3}\times\su(2)\con\RR^k\times \ng_{3,3}$} 
and a contraction $\su(2)\times\RR^3\con \ng_{3,3}$, 
is given for $r\to 0+$ by 
$$C_r
=\begin{pmatrix}
\hfill r & \hfill 0 & \hfill 0 & 0   & 0   & 0 \\
\hfill 0 & \hfill r & \hfill 0 & 0   & 0   & 0 \\
\hfill 0 & \hfill 0 & \hfill r & 0   & 0   & 0 \\
\hfill 0 & \hfill 0 &       -r & r^2 & 0   & 0 \\
      -r & \hfill 0 & \hfill 0 & 0   & r^2 & 0 \\
\hfill 0 &       -r & \hfill 0 & 0   & 0   & r^2 
\end{pmatrix}
\text{ with }
C_r^{-1}=
\begin{pmatrix}
 \frac{1}{r}  &  0                 &   0 & 0   & 0   & 0 \\
 0            &  \frac{1}{r}       &   0 & 0   & 0   & 0 \\
 0            &  0   &  \frac{1}{r}            &  0  & 0   & 0 \\
 0            &  0 & \frac{1}{r^2} & \frac{1}{r^2} & 0   & 0 \\
\frac{1}{r^2} &  0 &  0      & 0  & \frac{1}{r^2} & 0 \\
 0            &\frac{1}{r^2} &   0 & 0   & 0   & \frac{1}{r^2} \end{pmatrix}
$$
where $\su(2)\times\RR^3$ is defined on $\RR^6$ by the commutation relations~\eqref{su2}
while $\ng_{3,3}$ is defined also on $\RR^6$ 
by the commutation relations 
\begin{equation}\label{N33}
[e_1,e_2]=e_4,\ [e_2,e_3]=e_5,\ [e_3,e_1]=e_6. 
\end{equation}
\\ ({\bfi a2}) \fbox{$\RR^{k+3}\times\sl(2,\RR)\con\RR^k\times \ng_{3,3}$} 
and a contraction $\sl(2,\RR)\times\RR^3\con \ng_{3,3}$, 
is given for $r\to 0+$ by 
$$C_r
=\begin{pmatrix}
\hfill r &  0           & \hfill 0 & 0   & 0   & 0 \\
\hfill 0 &  r           & \hfill 0 & 0   & 0   & 0 \\
\hfill 0 &  0           & \hfill r & 0   & 0   & 0 \\
      -r &  0           & \hfill 0 & r^2 & 0   & 0 \\
\hfill 0 &  0           &       -r & 0   & r^2 & 0 \\
\hfill 0 &  \frac{r}{2} & \hfill 0 & 0   & 0   & r^2 
\end{pmatrix}
\text{ with }
C_r^{-1}=
\begin{pmatrix}
 \frac{1}{r}  & \hfill 0           &   0           &            0  & 0             & 0 \\
 0            & \hfill \frac{1}{r} &   0           &            0  & 0             & 0 \\
 0            & \hfill 0           & \frac{1}{r}   &            0  & 0             & 0 \\
 \frac{1}{r^2}& \hfill 0           &   0           & \frac{1}{r^2} & 0             & 0 \\
 0            & \hfill 0           & \frac{1}{r^2} &            0  & \frac{1}{r^2} & 0 \\
 0            &-\frac{1}{2r^2}     &  0            &            0  & 0             & \frac{1}{r^2} 
 \end{pmatrix}
$$
where $\sl(2,\RR)\times\RR^3$ is defined on $\RR^6$ by the commutation relations~\eqref{sl2}
while $\ng_{3,3}$ is defined also on $\RR^6$ 
by \eqref{N33}.
\item[(b)] Does $\RR^k\times$(iv)$\con$$\RR^{k+3}\times$(i) hold true? 
No, since 
the Lie algebra $\RR^k\times$(iv) is nilpotent while the algebras of $\RR^{k+3}\times$(i) are not; 
see \cite[Th. 1(14)]{NP06}. 
\end{itemize}   

\subsection{(i) vs. (v)}\label{Case4} 
\begin{itemize}
\item[(a)] Does $\RR^{k+2}\times$(i)$\con$$\RR^k\times$(v) hold true?  
Yes, and there are 2 possible situations: 
\\ ({\bfi a1}) \fbox{$\RR^{k+2}\times\su(2)\con\RR^k\times \ng_{2,1,2}$} 
and a contraction $\su(2)\times\RR^2\con \ng_{2,1,2}$, 
is given for $r\to 0+$ by 
$$C_r
=\begin{pmatrix}
\hfill r & \hfill 0 & 0   & 0   & 0 \\
\hfill 0 & \hfill r & 0   & 0   & 0 \\
\hfill 0 &        0 & r^2 & 0   & 0 \\
\hfill 0 & \hfill 1 & 0   & r^2 & 0 \\ 
      -1 & \hfill 0 & 0   & 0   & r^2 
\end{pmatrix}
\text{ with }
C_r^{-1}=
\begin{pmatrix}
 \frac{1}{r} & \hfill 0           & 0             & 0             & 0 \\
           0 & \hfill \frac{1}{r} & 0             & 0             & 0 \\
           0 & \hfill 0           & \frac{1}{r^2} & 0             & 0 \\
           0 &-\frac{1}{r^3}      & 0             & \frac{1}{r^2} & 0 \\ 
\frac{1}{r^3}& \hfill 0           & 0             & 0             & \frac{1}{r^2} 
\end{pmatrix}$$
where $\su(2)\times\RR^2$ is defined on $\RR^5$ by the commutation relations~\eqref{sl2}
while $\ng_{2,1,2}$ is defined also on $\RR^5$ 
by the commutation relations 
\begin{equation}\label{N212}
[e_1,e_2]=e_3,\ [e_1,e_3]=e_4,\ [e_2,e_3]=e_5. 
\end{equation}
\\ ({\bfi a2}) \fbox{$\RR^{k+2}\times\sl(2,\RR)\con\RR^k\times \ng_{2,1,2}$} 
and a contraction $\sl(2,\RR)\times\RR^2\con \ng_{2,1,2}$, 
is given for $r\to 0+$ by 
$$C_r
=\begin{pmatrix}
\hfill r & 0 & 0   & 0   & 0 \\
\hfill 0 & 0 & r^ 2  & 0   & 0 \\
\hfill 0 & \frac{r}{2} & 0 & 0   & 0 \\
      -1 & 0 & 0   & r^2 & 0 \\ 
\hfill 0 & 1 & 0   & 0   & r^2 
\end{pmatrix}
\text{ with }
C_r^{-1}=
\begin{pmatrix}
  \frac{1}{r} & \hfill 0 & 0   & 0   & 0 \\
            0 & \hfill 0 & \frac{2}{r}   & 0   & 0 \\
            0 & \hfill \frac{1}{r^2} & 0 & 0   & 0 \\
 \frac{1}{r^3}& \hfill 0 & 0   & \frac{1}{r^2} & 0 \\ 
            0 &0 &  -\frac{2}{r^3}  & 0   & \frac{1}{r^2} 
\end{pmatrix} $$
where $\sl(2,\RR)\times\RR^2$ is defined on $\RR^5$ by the commutation relations~\eqref{sl2}
while $\ng_{2,1,2}$ is defined also on $\RR^5$ by \eqref{N212}.
\item[(b)] Does $\RR^k\times$(v)$\con$$\RR^{k+2}\times$(i) hold true?
 No, since 
the Lie algebra $\RR^k\times$(iv) is nilpotent while the algebras of $\RR^{k+3}\times$(i) are not; 
see \cite[Th. 1(14)]{NP06}. 
\end{itemize}

\subsection{(ii) vs. (iii)}\label{Case6} 
(Recall the remark on $\RR^k\times$(ii) made in the above situation~\ref{Case1}.)
\begin{itemize}
\item[(a)] Does (ii)$\con$$\RR^{n-3}\times$(iii) hold true, for $n\ge 3$? 
No, since the derived  Lie algebra of (ii) is abelian while this is not the case for $\RR^{n-3}\times$(iii), 
hence we can use \cite[Th. 1(4)]{NP06}. 
\item[(b)] Does $\RR^{n-3}\times$(iii)$\con$(ii) hold true, for $n\ge 3$?  
There are 2 possible situations, corresponding to the two Lie algebras from (iii), 
and in order to analyze them we will need the following remarks:
\begin{align}
\label{Killing} 
\bullet\quad &  \text{the Killing form $K_{T}$ of $\gg_T$ satisfies $K_T((1,0),(1,0))=\Tr\,(T^2)$}\\
\label{unimodular}
\bullet\quad & \gg_T \emph{ is a unimodular Lie algebra }\iff \Tr\, T=0; \\
\label{center}
\bullet\quad &\Zc(\gg_T)=\Ker T; \\
\label{derived}
\bullet\quad &[\gg_T,\gg_T]=\Ran T \text{ and, more generally, }\gg_T^{j}=\Ran T^j \text{ for }j\ge1.
\end{align}
We can now study the 2 situations that can occur.  
\\ ({\bfi b1}) \fbox{$\RR^{n-3}\times A_{4,8}^{-1}\con\gg_T 
\iff \gg_T\in\{\ag_{n+1},\RR^{n-2}\times A_{3,4}^{-1}\}$ } 
\\ where $A_{3,4}^{-1}$ is the 3-dimensional Lie algebra (cf. \cite[\S VI.A]{NP06}) 
defined on~$\RR^3$ by the commutation relations 
\begin{equation}\label{A34-1}
[e_1,e_3]=e_1,\ [e_2,e_3]=-e_2. 
\end{equation}
In fact, the above implication ``$\Leftarrow$'' follows by \cite[\S VI.B]{NP06}, 
since for $n=3$ a contraction $A_{4,8}^{-1}\con A_{3,4}^{-1}\times\RR$ 
is given by 
$$C_r
=\begin{pmatrix}
0 & 0 & 0 & r \\
r & 0& 0 & 0 \\
0 & r & 0 & 0 \\
0 & 0 & 1 & 0 
\end{pmatrix}$$
while a contraction $A_{4,8}^{-1}\con \ag_4$ exists trivially. 

For the converse implication ``$\Rightarrow$'', assume there exists a contraction 
$\RR^{n-3}\times A_{4,8}^{-1}\con\gg_T$. 
By using the above remarks \eqref{center} and \eqref{derived}, 
along with \cite[Th. 1((3)--(4))]{NP06}, 
we obtain $\dim(\Ker T)\ge n-2$ and $\dim(\Ran T)\le 3$. 
It then follows that the Jordan cells in the canonical form of the linear operator $T\colon\RR^n\to\RR^n$ 
must satisfy one of the following conditions: 

\smallbreak 

$1^\circ$ We have $T=0$, and then $\gg_T=\ag_{n+1}$ (the abelian $(n+1)$-dimensional Lie algebra). 

\smallbreak 

$2^\circ$ There is precisely one nonzero cell of size $2\times 2$, and it corresponds to 
two complex conjugate eigenvalues $z,\bar z\in\CC\setminus\RR$. 
Moreover, since the Lie algebra $A_{4,8}^{-1}$ is unimodular, 
it follows that so is $\RR^{n-3}\times A_{4,8}^{-1}$, 
and then so is $\gg_T$, by \cite[Th. 1(12)]{NP06}. 
Consequently, by using the above remark~\eqref{unimodular}, 
we obtain $\Tr\,T=0$, hence $\bar z=-z$. 
Then there exists $b\in\RR\setminus\{0\}$ for which $z=\ie b$, hence 
\begin{equation}\label{Tb}
T=\begin{pmatrix}
\hfill 0  & b &             \raisebox{-3pt}{\text{\bf\huge 0}}\\
       -b & 0 &             \\
\hfill \text{\bf\huge 0}&   &     \text{\bf\huge 0}  
\end{pmatrix}
\end{equation}
and this implies $\Tr(T^2)=-2b^2<0$. 
By using \eqref{Killing}, 
we thus see that the Killing form of $\gg_T$ has one negative eigenvalue, 
the other eigenvalues being clearly equal to 0 since $\ag_n$ is an abelian ideal of $\gg_T$. 
On the other hand, the Killing form of $A_{4,8}^{-1}$ is nonnegative definite 
(see for instance \cite[\S VI.B]{NP06}), hence we cannot have $\RR^{n-3}\times A_{4,8}^{-1}\con\gg_T$, 
since \cite[Th. 1(16)]{NP06} 
shows that the number of positive eigenvalues of the Killing form cannot decrease 
by a contraction.  

\smallbreak 

$3^\circ$ There are precisely two nonzero cells of size 1 in the Jordan canonical form of $T$, 
and they correspond to some real eigenvalues $\lambda,\mu\in\RR$.
We have $\Tr\,T=0$ as in the above case $2^\circ$, hence $\mu=-\lambda\ne 0$. 
Then the Jordan canonical form of $T$ is 
\begin{equation}\label{Tlambda}
T=\begin{pmatrix}
 \lambda  & \hfill 0 &  \raisebox{-3pt}{\text{\bf\huge 0}}\\
      0  & -\lambda &             \\
 \text{\bf\huge 0}&   &     \text{\bf\huge 0}  
\end{pmatrix}
\end{equation}
and then $\gg_T$ is given by the commutation relations 
$[e_1,e_2]=\lambda e_2$ and $[e_1,e_3]=-\lambda e_3$ on $\RR^{n+1}$. 
Since $0\ne\lambda\in\RR$, these commutation relations are equivalent to \eqref{A34-1}, 
and it thus follows that $\gg_T=A_{3,4}^{-1}\times \RR^{n-2}$. 
This completes the proof of the above equivalence~({\bfi b1}). 
\\ ({\bfi b2}) \fbox{$\RR^{n-3}\times A_{4,9}^0 \con\gg_T 
\iff \gg_T\in\{\ag_{n+1},\RR^{n-2}\times A_{3,5}^0\}$ } 
\\ where $A_{3,5}^0$ is the 3-dimensional Lie algebra (cf. \cite[\S VI.A]{NP06}) 
defined on~$\RR^3$ by the commutation relations 
\begin{equation}\label{A350}
[e_1,e_3]=-e_2,\ [e_2,e_3]=e_1.
\end{equation}
In fact, the above implication ``$\Leftarrow$'' follows by \cite[\S VI.B]{NP06}, 
since for $n=3$ a contraction $A_{4,9}^0\con A_{3,5}^0\times\RR$ 
is given by 
$$C_r=\begin{pmatrix}
0 & 0 & 0 & 1 \\
1 & 0 & 0 & 0 \\
0 & 1 & 0 & 0 \\
0 & 0 & 1 & 0 
\end{pmatrix} 
\begin{pmatrix}
r & 0 & 0 & 0 \\
0 & r & 0 & 0 \\
0 & 0 & 1 & 0 \\
0 & 0 & 0 & 1 
\end{pmatrix}
=\begin{pmatrix}
0 & 0 & 0 & 1 \\
r & 0 & 0 & 0 \\
0 & r & 0 & 0 \\
0 & 0 & 1 & 0 
\end{pmatrix}$$
while a contraction $A_{4,9}^0\con \ag_4$ exists trivially. 

For the converse implication ``$\Rightarrow$'', assume there exists a contraction 
$\RR^{n-3}\times A_{4,9}^0\con\gg_T$. 
By using as above the remarks \eqref{center} and \eqref{derived}, 
along with \cite[Th. 1((3)--(4))]{NP06}, 
we obtain $\dim(\Ker T)\ge n-2$ and $\dim(\Ran T)\le 3$. 
Hence the Jordan cells in the canonical form of the linear operator $T\colon\RR^n\to\RR^n$ 
must satisfy one of the following conditions: 

\smallbreak 

$1^\circ$ We have $T=0$, and then $\gg_T=\ag_{n+1}$ (the abelian $(n+1)$-dimensional Lie algebra). 

\smallbreak 

$2^\circ$ There is precisely one nonzero cell of size $2\times 2$, and it corresponds to 
two complex conjugate eigenvalues $z,\bar z\in\CC\setminus\RR$.  
Since the Lie algebra $A_{4,9}^0$ is unimodular, 
it follows that so is $\RR^{n-3}\times A_{4,9}^0$, 
and then so is $\gg_T$, by \cite[Th. 1(12)]{NP06}. 
Consequently, by using the above remark~\eqref{unimodular}, 
we obtain $\Tr\,T=0$, hence $\bar z=-z$. 
Then there exists $b\in\RR\setminus\{0\}$ for which $z=\ie b$, hence 
we the Jordan canonical form of $T$ is \eqref{Tb} 
and then $\gg_T$ is given by the commutation relations 
$[e_1,e_2]=-b e_3$ and $[e_1,e_3]=b e_2$ on $\RR^{n+1}$. 
Since $0\ne b\in\RR$, these commutation relations are equivalent to \eqref{A350}, 
and it thus follows that $\gg_T=A_{3,5}^0\times \RR^{n-2}$.  

\smallbreak 

$3^\circ$ There are precisely two nonzero cells of size 1 in the Jordan canonical form of $T$, 
and they correspond to some real eigenvalues $\lambda,\mu\in\RR$.
We have $\Tr\,T=0$ as in the above case $2^\circ$, hence $\mu=-\lambda\ne 0$. 
Then the Jordan canonical form of $T$ is \eqref{Tlambda} 
and this implies $\Tr(T^2)=2\lambda^2>0$.
By using \eqref{Killing}, 
we thus see that the Killing form of $\gg_T$ has one positive eigenvalue, 
the other eigenvalues being equal to 0 since $\ag_n$ is an abelian ideal of $\gg_T$. 
On the other hand, the Killing form of $A_{4,9}^0$ is nonpositive definite 
(see for instance \cite[\S VI.B]{NP06}), hence we cannot have $\RR^{n-3}\times A_{4,8}^{-1}\con\gg_T$, 
since \cite[Th. 1(16)]{NP06} 
shows that the number of negative eigenvalues of the Killing form cannot decrease 
by a contraction.  
This completes the proof of the above equivalence~({\bfi b2}). 
\end{itemize}

\subsection{(ii) vs. (iv)}\label{Case7} 
(Recall the remark on $\RR^k\times$(ii) made in the above situation~\ref{Case1}.)
\begin{itemize}
\item[(a)] Does (ii)$\con$$\RR^{n-5}\times$(iv) hold true, for $n\ge 5$? 
No, since the maximum of the dimensions of the abelian ideals of a Lie algebra  
cannot decrease by a contraction process; see \cite[Th. 1(8)]{NP06}. 
\item[(b)] Does $\RR^{n-5}\times$(iv)$\con$(ii) hold true, for $n\ge 5$?  
The answer is the following: 
\\ \fbox{$\RR^{n-5}\times\ng_{3,3} \con\gg_T 
\iff \gg_T\in\{\ag_{n+1},\RR^{n-2}\times \hg_3,\RR^{n-4}\times\ng_{1,2,2}\}$ } 
\\ where $\hg_3$ is the 3-dimensional Heisenberg algebra and 
$\ng_{1,2,2}$ is the 5-dimen\-sional 2-step nilpotent Lie algebra 
defined on~$\RR^5$ by the commutation relations 
\begin{equation}\label{N122}
[e_1,e_2]=e_4,\ [e_1,e_3]=e_5.
\end{equation}
In fact, if $\RR^{n-5}\times\ng_{3,3} \con\gg_T$ then, 
by using \eqref{center}, \eqref{derived} and \cite[Th.1((3)--(4),(14))]{NP06}, 
we obtain that the linear operator $T\colon\RR^n\to\RR^n$ must 
satisfy $\dim(\Ran T)\le 3$, $\dim(\Ker T)\ge n-2$, and $T^2=0$. 
Therefore we have $2\le\dim(\Ran T)\le 3$ and $T^2=0$. 
By considering the Jordan canonical form of $T$, 
we see that there are only 3 situations that can occur:  

\smallbreak 

$1^\circ$ The canonical form of $T$ contains precisely 2 nonzero cells, 
each of them having the size $2\times 2$. 
Then the Lie algebra $\gg_T$ is defined on $\RR^{n+1}$ by the commutation relations 
$$[e_1,e_2]=e_4,\ [e_1,e_3]=e_6,$$
hence we have $\gg_T=\RR^{n-4}\times\ng_{1,2,2}$. 
For $n=5$, a contraction $\ng_{3,3} \con \RR\times\ng_{1,2,2}$ 
can be defined by 
$$C_r=\begin{pmatrix}
r & 0 & 0 & 0 & 0 & 0 \\
0 & r & 0 & 0 & 0 & 0 \\
0 & 0 & r & 0 & 0 & 0 \\
0 & 0 & 0 & r^2 & 0 & 0 \\
0 & 0 & 0 & 0 & 0 & r \\
0 & 0 & 0 & 0 & -r^2 & 0 
\end{pmatrix}$$
with respect to the above commutation relations. 

\smallbreak 

$2^\circ$ The canonical form of $T$ contains precisely 1 nonzero cell, 
 having the size $2\times 2$. 
Then the Lie algebra $\gg_T$ is defined on $\RR^{n+1}$ by the commutation relations 
$$[e_1,e_2]=e_4,$$
hence we have $\gg_T=\RR^{n-2}\times \hg_3$. 
For $n=5$, a contraction $\ng_{3,3} \con \RR^3\times\hg_3$ 
can be defined by 
$$C_r=\begin{pmatrix}
r & 0 & 0 & 0 & 0 & 0 \\
0 & r & 0 & 0 & 0 & 0 \\
0 & 0 & r & 0 & 0 & 0 \\
0 & 0 & 0 & r^2 & 0 & 0 \\
0 & 0 & 0 & 0 & r & 0 \\
0 & 0 & 0 & 0 & 0 & r 
\end{pmatrix}$$
with respect to the above commutation relations. 

\smallbreak 

$3^\circ$ We have $T=0$, and then $\gg_T=\ag_{n+1}$, and  
$\RR^{n-5}\times\ng_{3,3} \con\ag_{n+1}$ trivially. 
\end{itemize}

\subsection{(ii) vs. (v)}\label{Case8}
(Recall the remark on $\RR^k\times$(ii) made in the above situation~\ref{Case1}.)
\begin{itemize}
\item[(a)] Does (ii)$\con$$\RR^{n-4}\times$(v) hold true, for $n\ge 4$?   
No, since the maximum of the dimensions of the abelian ideals of a Lie algebra  
cannot decrease by a contraction process; see \cite[Th. 1(8)]{NP06}. 
\item[(b)] Does $\RR^{n-4}\times$(v)$\con$(ii) hold true, for $n\ge 4$? 
The answer is the following: 
\\ \fbox{$\RR^{n-4}\times\ng_{2,1,2} \con\gg_T 
\Leftrightarrow \gg_T\in\{\ag_{n+1},\RR^{n-2}\times \hg_3,\RR^{n-4}\times\ng_{1,2,2},\RR^{n-3}\times\fg_4\}$ } 
\\ where $\hg_3$ is the 3-dimensional Heisenberg algebra and 
$\fg_4$ is the 4-dimen\-sional filiform nilpotent Lie algebra 
defined on~$\RR^4$ by the commutation relations 
\begin{equation}\label{F4}
[e_1,e_2]=e_3,\ [e_1,e_3]=e_4.
\end{equation}
In fact, if $\RR^{n-5}\times\ng_{2,1,2} \con\gg_T$ then, 
by using \eqref{center}, \eqref{derived} and \cite[Th.1((3)--(4),(14))]{NP06}, 
we obtain that the linear operator $T\colon\RR^n\to\RR^n$ must 
satisfy $\dim(\Ran T)\le 3$, $\dim(\Ker T)\ge n-2$, and $T^3=0$. 
Therefore we have $2\le\dim(\Ran T)\le 3$ and $T^3=0$. 
By considering the Jordan canonical form of $T$, 
we see that there are only 4 situations that can occur: 

$1^\circ$ The canonical form of $T$ contains precisely 1 nonzero cell, 
having the size $3\times 3$. 
Then the Lie algebra $\gg_T$ is defined on $\RR^{n+1}$ by the commutation relations~\eqref{F4}
hence we have $\gg_T=\RR^{n-3}\times\fg_4$. 
For $n=4$, a contraction $\ng_{2,1,2} \con \RR\times\fg_4$ 
can be defined by 
$$C_r=\begin{pmatrix}
r & 0 & 0   & 0   & 0  \\
0 & r & 0   & 0   & 0  \\
0 & 0 & r^2 & 0   & 0  \\
0 & 0 & 0   & r^3 & 0  \\
0 & 0 & 0   & 0   & r  
\end{pmatrix}$$
with respect to the above commutation relations.

\smallbreak 

$2^\circ$ The canonical form of $T$ contains precisely 2 nonzero cells, 
each of them having the size $2\times 2$. 
Then the Lie algebra $\gg_T$ is defined on $\RR^{n+1}$ by the commutation relations 
$$[e_1,e_3]=e_4,\ [e_2,e_3]=e_5,$$
which are equivalent to \eqref{N122}, 
hence we have $\gg_T=\RR^{n-5}\times\ng_{1,2,2}$. 
For $n=5$, a contraction $\ng_{2,1,2} \con \ng_{1,2,2}$ 
can be defined by 
$$C_r=\begin{pmatrix}
1 & 0 & 0   & 0   & 0  \\
0 & r & 0   & 0   & 0  \\
0 & 0 & 1   & 0   & 0  \\
0 & 0 & 0   & 1   & 0  \\
0 & 0 & 0   & 0   & r  
\end{pmatrix}$$with respect to the above commutation relations. 

\smallbreak 

$3^\circ$ The canonical form of $T$ contains precisely 1 nonzero cell, 
 having the size $2\times 2$. 
Then the Lie algebra $\gg_T$ is defined on $\RR^{n+1}$ by the commutation relations 
$$[e_1,e_2]=e_3,$$
hence we have $\gg_T=\RR^{n-2}\times \hg_3$. 
For $n=4$, a contraction $\ng_{2,1,2} \con \RR^2\times\hg_3$ 
can be defined by 
$$C_r=\begin{pmatrix}
1 & 0 & 0   & 0   & 0  \\
0 & r & 0   & 0   & 0  \\
0 & 0 & r   & 0   & 0  \\
0 & 0 & 0   & 1   & 0  \\
0 & 0 & 0   & 0   & 1  
\end{pmatrix}$$
with respect to the above commutation relations. 

\smallbreak 

$4^\circ$ We have $T=0$, and then $\gg_T=\ag_{n+1}$, and  
$\RR^{n-4}\times\ng_{2,1,2} \con\ag_{n+1}$ trivially. 
\end{itemize}

\subsection{(iii) vs. (iii)}\label{Case8half} 
Let $\RR\ltimes_1\hg_3$ and $\RR\ltimes_2\hg_3$ be the two Lie algebras from (iii). 
\begin{itemize}
\item[(a)] Does $\RR^k\times(\RR\ltimes_1\hg_3)\con\RR^k\times(\RR\ltimes_2\hg_3)$ hold true? 
No, since 
$\RR^k\times(\RR\ltimes_1\hg_3)\not\simeq\RR^k\times(\RR\ltimes_2\hg_3)$ while the algebras of derivations 
of these two Lie algebras have the same dimension $5+k^2$ 
(see \cite[\S VI.B]{NP06} for the case $k=0$), hence we may use \cite[Th. 1(1)]{NP06}.
\item[(b)] Does $\RR^k\times(\RR\ltimes_2\hg_3)\con\RR^k\times(\RR\ltimes_1\hg_3)$ hold true? 
No, for the same reason as above. 
\end{itemize}

\subsection{(iii) vs. (iv)}\label{Case9}\hfill
\begin{itemize}
\item[(a)] Does $\RR^{k+2}\times$(iii)$\con$$\RR^k\times$(iv) hold true?  
Yes, and there are 2 possible situations: 
\\ ({\bfi a1}) \fbox{$\RR^{k+2}\times A_{4,8}^{-1}\con\RR^k\times \ng_{3,3}$} 
and a contraction $A_{4,8}^{-1}\times\RR^2\con \ng_{3,3}$, 
is given for $r\to 0+$ by 
$$C_r
=\begin{pmatrix}
\hfill r & \hfill 0           & \hfill 0           & \hfill 0 & 0             & 0 \\
\hfill 0 & \hfill \frac{1}{r} & \hfill 0           & \hfill 0 & 0             & 0 \\
\hfill 0 & \hfill 0           & \hfill \frac{1}{r} & \hfill 0 & 0             & 0 \\
\hfill 0 &       -1           & \hfill 0           & \hfill r & 0             & 0 \\
       0 & \hfill 0           & \hfill 0           & -1       & \frac{1}{r^2} & 0 \\
\hfill 0 & \hfill 0           & -1                 & \hfill 0 & 0             & r 
\end{pmatrix}
\text{ with }
C_r^{-1}=
\begin{pmatrix}
 \frac{1}{r}  &  0             & 0 & 0           & 0   & 0 \\
 0            &  r             & 0 & 0           & 0   & 0 \\
 0            &  0             & r & 0           & 0   & 0 \\
 0            &  1             & 0 & \frac{1}{r} & 0   & 0 \\
 0            &  r^2           & 0 & r           & r^2 & 0 \\
 0            &  0             & 1 & 0           & 0   & \frac{1}{r} 
 \end{pmatrix}
$$
where $A_{4,8}^{-1}\times\RR^2$ is defined on $\RR^6$ by the commutation relations 
\begin{equation}\label{A48-1bis}
[e_1,e_2]=e_2,\ [e_1,e_3]=-e_3,\ [e_2,e_3]=e_5 
\end{equation}
which are obtained from \eqref{A48-1} by relabeling the basis vectors, 
while $\ng_{3,3}$ is defined also on $\RR^6$ 
by the commutation relations \eqref{N33}. 
\\ ({\bfi a2}) \fbox{$\RR^{k+2}\times A_{4,9}^0\con\RR^k\times \ng_{3,3}$} 
and a contraction $A_{4,9}^0\times\RR^2\con \ng_{3,3}$, 
is given for $r\to 0+$ by 
$$C_r
=\begin{pmatrix}
\hfill r & \hfill 0 & \hfill 0 & 0   & 0   & 0 \\
\hfill 0 & \hfill r & \hfill 0 & 0   & 0   & 0 \\
\hfill 0 & \hfill 0 & \hfill r & 0   & 0   & 0 \\
       0 & \hfill 0 & -1       & r   & 0   & 0 \\
\hfill 0 & \hfill 0 & \hfill 0 & 0   & r^2 & 0 \\
\hfill 0 &  -1      & \hfill 0 & 0   & 0   & r 
\end{pmatrix}
\text{ with }
C_r^{-1}=
\begin{pmatrix}
 \frac{1}{r}  & \hfill 0           &   0           &            0  & 0             & 0 \\
 0            & \hfill \frac{1}{r} &   0           &            0  & 0             & 0 \\
 0            & \hfill 0           & \frac{1}{r}   &            0  & 0             & 0 \\
 0            & \hfill 0           & \frac{1}{r^2} & \frac{1}{r}   & 0             & 0 \\
 0            & \hfill 0           &  0            &            0  & \frac{1}{r^2} & 0 \\
 0            & \frac{1}{ r^2}     &  0            &            0  & 0             & \frac{1}{r} 
 \end{pmatrix}
$$
where $A_{4,9}^0\times\RR^2$ is defined on $\RR^6$ by the commutation relations 
\begin{equation}\label{A490bis}
[e_1,e_2]=e_3,\ [e_1,e_3]=-e_2,\ [e_2,e_3]=e_5 
\end{equation}
which are obtained from \eqref{A490} by relabeling the basis vectors,
while $\ng_{3,3}$ is defined also on $\RR^6$ 
by \eqref{N33}.
\item[(b)] Does $\RR^k\times$(iv)$\con$$\RR^{k+2}\times$(iii) hold true?   
No, since $\RR^k\times$(iv) is nilpotent while $\RR^{k+2}\times$(iii) is not, 
hence we can use \cite[Th. 1(14)]{NP06}. 
\end{itemize}

\subsection{(iii) vs. (v)}\label{Case10} 
This is similar to situation~\ref{Case9}. 
\begin{itemize}
\item[(a)] Does $\RR^{k+1}\times$(iii)$\con$$\RR^k\times$(v) hold true?   
Yes, and there are 2 possible situations: 
\\ ({\bfi a1}) \fbox{$\RR^{k+1}\times A_{4,8}^{-1}\con\RR^k\times \ng_{2,1,2}$} 
and a contraction $A_{4,8}^{-1}\times\RR\con \ng_{2,1,2}$, 
is given for $r\to 0+$ by 
$$C_r
=\begin{pmatrix}
\hfill r & \hfill 0           & \hfill 0           & \hfill 0   & 0              \\
\hfill 0 & \hfill r           & \hfill 0           & \hfill 0   & 0              \\
\hfill 0 & \hfill 0           & \hfill r^2         & \hfill 0   & 0              \\
\hfill 0 &       -1           & \hfill 0           & \hfill r^2 & 0              \\
       0 & \hfill 0           & \hfill 0           &  0         & r^3  
\end{pmatrix}
\text{ with }
C_r^{-1}=
\begin{pmatrix}
 \frac{1}{r}  &  0             & 0             & 0             & 0    \\
 0            &  \frac{1}{r}   & 0             & 0             & 0    \\
 0            &  0             & \frac{1}{r^2} & 0             & 0    \\
 0            &  \frac{1}{r^3} & 0             & \frac{1}{r^2} & 0    \\
 0            &  0             & 0             & 0             & \frac{1}{r^3}   
 \end{pmatrix}
$$
where $A_{4,8}^{-1}\times\RR$ is defined on $\RR^5$ by the commutation relations 
\begin{equation*}
[e_1,e_2]=e_3,\ [e_1,e_3]=e_2,\ [e_2,e_3]=e_5 
\end{equation*}
which are obtained by writing \eqref{A48-1bis} 
with respect to the basis 
$$\{e_1,e_2+e_3,e_2-e_3,-2e_5\}$$ 
while $\ng_{2,1,2}$ is defined also on $\RR^5$ 
by the commutation relations \eqref{N212}. 
\\ ({\bfi a2}) \fbox{$\RR^{k+1}\times A_{4,9}^0\con\RR^k\times \ng_{2,1,2}$} 
and a contraction $A_{4,9}^0\times\RR\con \ng_{2,1,2}$, 
is given for $r\to 0+$ by 
$$C_r
=\begin{pmatrix}
\hfill r & \hfill 0           & \hfill 0           & \hfill 0   & 0              \\
\hfill 0 & \hfill r           & \hfill 0           & \hfill 0   & 0              \\
\hfill 0 & \hfill 0           & \hfill r^2         & \hfill 0   & 0              \\
\hfill 0 &        1           & \hfill 0           & \hfill r^2 & 0              \\
       0 & \hfill 0           & \hfill 0           &  0         & r^3  
\end{pmatrix}
\text{ with }
C_r^{-1}=
\begin{pmatrix}
 \frac{1}{r}  & \hfill 0             & 0             & 0             & 0    \\
 0            & \hfill \frac{1}{r}   & 0             & 0             & 0    \\
 0            & \hfill 0             & \frac{1}{r^2} & 0             & 0    \\
 0            &  -\frac{1}{r^3}      & 0             & \frac{1}{r^2} & 0    \\
 0            & \hfill 0             & 0             & 0             & \frac{1}{r^3}   
 \end{pmatrix}
$$where $A_{4,9}^0\times\RR$ is defined on $\RR^5$ by 
the commutation relations \eqref{A490bis},
while $\ng_{2,1,2}$ is defined also on $\RR^5$ 
by \eqref{N212}.
\item[(b)] Does $\RR^k\times$(v)$\con$$\RR^{k+1}\times$(iii) hold true?  
No, since $\RR^k\times$(v) is nilpotent while $\RR^{k+1}\times$(iii) is not, 
hence we can use \cite[Th. 1(14)]{NP06}. 
\end{itemize}

\subsection{(iv) vs. (v)}\label{Case11}\hfill 
\begin{itemize}
\item[(a)] Does $\RR^k\times$(iv)$\con$$\RR^{k+1}\times$(v) hold true? 
No, since $\RR^k\times$(iv) is 2-step nilpotent while $\RR^{k+1}\times$(v) is 3-step nilpotent; see \cite[Th. 1(14)]{NP06}.
\item[(b)] Does $\RR^{k+1}\times$(v)$\con$$\RR^k\times$(iv) hold true? 
No, since the dimension of the derived algebra cannot increase by a contraction process; see \cite[Th. 1(5)]{NP06}.
\end{itemize}

\section{Contractions of Lie algebras with hyperplane abelian ideals}\label{Case5}

In this section we discuss the case (ii) vs. (ii), 
that is, contractions within the Lie algebras of type (ii) from Problem~\ref{probl1}. 
In particular we provide a proof for Theorem~\ref{main}\eqref{main_item3}; 
see Proposition~\ref{24dec2013} below. 

There are many semidirect products $\RR\ltimes\ag_n$, 
determined by the various linear operators on $\ag_n=\RR^n$, 
and we are asking here about the contractions between the various Lie algebras  
obtained in this way. 
(Recall the remark on $\RR^k\times$(ii) made in the above situation~\ref{Case1}.)
The above Proposition~\ref{prop1} belongs to this circle of ideas, 
but it does not provide the complete answer. 
We establish below a few more results of this type and settle the question completely. 
So we wish to find necessary/sufficient conditions on $T,T_0\colon\RR^n\to\RR^n$ 
ensuring that $\gg_T\con\gg_{T_0}$.  
It will be convenient to use the notation $\Sc(T):=\{CTC^{-1}\mid C\in \GL(n,\RR)\}$ 
for the similarity orbit,  
$\overline{\Sc(T)}$ for the closure of $\Sc(T)$,  
and $\Ac(T):=\{T_0\in M_n(\RR)\mid \gg_T\con\gg_{T_0}\}$, 
for any linear operator identified to a matrix $T\in M_n(\RR)$. 

We denote by $C(T)$ the double cone generated by $S(T)$, that is, 
$$C(T):=\bigcup_{\lambda \in \RR}\lambda \Sc(T).$$

\begin{proposition}\label{24dec2013} 
Let $T, S\not= 0$. 
\begin{enumerate} 
\item If ${\mathfrak g}_T\simeq {\mathfrak g}_S$ then $S\in C(T)\setminus (0)$.

\item If ${\mathfrak g}_T\con {\mathfrak g}_S$ then $S\in \overline {C(T)\setminus (0)}$.

\end{enumerate}  
\end{proposition}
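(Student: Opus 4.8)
The plan is to establish the isomorphism statement (1) by a direct structural computation and then to deduce the contraction statement (2) from it by a compactness argument over the Grassmannian of codimension-one subspaces.

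For (1), write $\gg_T=\RR e_0\oplus\ag_n$ and $\gg_S=\RR f_0\oplus\ag_n$ with $[e_0,v]_{\gg_T}=Tv$, $[f_0,w]_{\gg_S}=Sw$, and $\ag_n$ abelian in both. Given an isomorphism $\phi\colon\gg_S\to\gg_T$, I would write it in block form $\phi(f_0)=\alpha e_0+b$ and $\phi(w)=\beta(w)e_0+Cw$ for $w\in\ag_n$, with $\alpha\in\RR$, $b\in\ag_n$, $\beta\in\ag_n^*$ and $C\colon\ag_n\to\ag_n$ linear. Comparing the $e_0$- and $\ag_n$-components of $\phi[f_0,w]_{\gg_S}=[\phi f_0,\phi w]_{\gg_T}$ and of $\phi[v,w]_{\gg_S}=[\phi v,\phi w]_{\gg_T}$ yields a short list of relations; the one forced by $[\ag_n,\ag_n]_{\gg_S}=0$ reads $\beta(v)TCw=\beta(w)TCv$. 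If $\rank T\ge2$ this compels $\beta=0$, so the surviving relation is $CS=\alpha TC$ with $\alpha\ne0$ (by invertibility of $\phi$), that is $S=\alpha\,C^{-1}TC\in C(T)\setminus(0)$. The case $\rank T=1$ I would settle separately: an isomorphism preserves both $\dim[\gg,\gg]$ and nilpotency, so $S$ and $T$ lie in the same one of the two classes of rank-one operators (trace zero versus trace nonzero), each of which forms a single similarity orbit up to nonzero scaling, whence again $S\in C(T)\setminus(0)$.

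For (2), suppose $\gg_T\con\gg_S$ via linear isomorphisms $C_r\colon\gg_S\to\gg_T$ as in Definition~\ref{contr0_def}. I would transport the brackets to the fixed space $V:=\gg_S$ by $[x,y]_{(r)}:=C_r^{-1}[C_rx,C_ry]_{\gg_T}$, so that $(V,[\cdot,\cdot]_{(r)})\simeq\gg_T$ while $[\cdot,\cdot]_{(r)}\to[\cdot,\cdot]_{\gg_S}$ as $r\to0$. Each $(V,[\cdot,\cdot]_{(r)})$ carries the codimension-one abelian ideal $J_r:=C_r^{-1}(\ag_n)$. By compactness of the Grassmannian of $n$-planes in $V$, I pass to a subsequence along which $J_r\to J_*$, and the joint continuity of the converging brackets shows that $J_*$ is again a codimension-one abelian ideal, now of $\gg_S$. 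Choosing $\ell_*\in V\setminus J_*$ and $\ell_r\in V\setminus J_r$ with $\ell_r\to\ell_*$, I set $S_r:=(\ad^{(r)}_{\ell_r})|_{J_r}$ and $S_*:=(\ad^{\gg_S}_{\ell_*})|_{J_*}$. Transporting the structure of $\gg_T$ through $C_r$ (which sends $J_r$ to $\ag_n$ and $\ell_r$ to $\alpha_r e_0+(\text{element of }\ag_n)$ with $\alpha_r\ne0$) shows $S_r$ is conjugate to $\alpha_r T$, so its matrix in any basis of $J_r$ lies in $C(T)\setminus(0)$; picking bases of $J_r$ converging to one of $J_*$ and using $[\ell_r,\cdot]_{(r)}\to[\ell_*,\cdot]_{\gg_S}$, the matrices of $S_r$ converge to that of $S_*$, so the latter lies in $\overline{C(T)\setminus(0)}$. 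Finally $\gg_{S_*}$ is just $\gg_S$ written with respect to $\RR\ell_*\oplus J_*$, so $\gg_{S_*}\simeq\gg_S$, and part~(1) gives $S\in C(S_*)\setminus(0)$; since $\overline{C(T)\setminus(0)}$ is a closed cone stable under similarity and contains $S_*$, it contains all of $C(S_*)$, hence $S\in\overline{C(T)\setminus(0)}$.

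The main obstacle I anticipate is the control of the moving ideals $J_r$: one must confirm that the Grassmannian limit $J_*$ is still abelian and still an ideal of the limit algebra, and that the structure operators $S_r$—living on different subspaces—can legitimately be compared and shown to converge to $S_*$ while remaining inside the cone $C(T)$. The closing device is the reduction back to $S$ through the isomorphism $\gg_{S_*}\simeq\gg_S$ together with the invariance of $\overline{C(T)}$ under scaling and conjugation, which is precisely what ties the argument to part~(1).
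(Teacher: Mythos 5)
Your argument is correct, but it takes a genuinely different route from the paper's, most substantially in part (2). In part (1) the paper also uses a block decomposition, but it first composes the isomorphism with one of the maps $E(u)\colon(t,v)\mapsto(t,tu+v)$ so that the distinguished generator is sent into the line $\RR e_0$ with no $\ag_n$-component, and (by a second use of $E(u)$) so that the $\ag_n\to\ag_n$ block $A$ is invertible; after this normalization the single identity $a'SA=AT$ falls out and no case distinction on $\rank T$ is needed. Because you do not normalize away the $\ag_n$-component $b$ of $\phi(f_0)$, the cross term $\beta(w)Tb$ survives and you are forced into the dichotomy $\rank T\ge 2$ versus $\rank T=1$; your claim that $\beta(v)TCw=\beta(w)TCv$ ``compels $\beta=0$'' when $\rank T\ge 2$ is true but deserves a line of justification: invertibility of $\phi$ gives $\Ker C\cap\Ker\beta=\{0\}$, so if $\beta\ne0$ then either $C$ is invertible (whence $\rank(TC)=\rank T\ge 2$, contradicting the rank-$\le1$ consequence of the relation) or $\Ker C=\RR x_0$ with $\beta(x_0)\ne0$ (whence $TC=0$ and $\Ran C\subseteq\Ker T$, again contradicting $\rank T\ge 2$). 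Your rank-one case via the two similarity classes of rank-one matrices is fine. In part (2) the paper simply repeats the block computation for the family $C_r$, perturbing to $C_r+\varepsilon_r I$ so that the blocks $A_r$ become invertible, and reads off $\lim_{r\to0}a_r'A_r^{-1}TA_rv=Sv$ directly; you instead take a subsequential Grassmannian limit $J_*$ of the codimension-one abelian ideals $J_r=C_r^{-1}(\ag_n)$, verify that $J_*$ is again a codimension-one abelian ideal of $\gg_S$, and reduce to part (1). Your route is more geometric, avoids the perturbation step (which the paper asserts without verification), and isolates the structural fact that a hyperplane abelian ideal survives the contraction; its costs are the compactness bookkeeping and the need to observe that $S_*\ne0$ (otherwise $\gg_S$ would be abelian, contradicting $S\ne0$) before part (1) can be invoked. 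Both proofs are valid.
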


\begin{proof} (1) Let $D: {\mathfrak g}_T\rightarrow {\mathfrak g}_S$ be a Lie algebra isomorphism.
We write $D$ as the block matrice 
$$D= 
\begin{pmatrix} 
a& b^t \\
c & A  \\
\end{pmatrix}$$
where $a\in \RR$, $b,c\in M_{n1}(\RR)$ and $A\in M_n(\RR)$.

Note that for each $u\in M_{n1}(\RR)$, the map 
$$E(u)=\begin{pmatrix} 
1& 0 \\
u & I_n  \\
\end{pmatrix}$$
is a Lie algebra isomorphism from ${\mathfrak g}_T$ to ${\mathfrak g}_S$.

First, we assume that $A$ is invertible. Then the map
$$D_1:=DE(-A^{-1}c)=\begin{pmatrix} 
a-b^tA^{-1}c& b^t \\
0 & A \\
\end{pmatrix}$$
is also a Lie algebra isomorphism from ${\mathfrak g}_T$ to ${\mathfrak g}_S$. In particular,
one has $a':=a-b^tA^{-1}c\not= 0$. By writing the equality
$$[D_1(1,0), D_1(0,v)]_S=D_1([(1,0),(0,v)]_T),$$
we obtain that $a'SA=AT$ hence $S\in C(T)\setminus (0)$.

Now assume that $A$ is not invertible. Since $D$ is invertible, we have necessarily that $b\not= 0$. 
We choose $u\notin \Ran A$ and we consider the map
$$D_2:=E(u)D=\begin{pmatrix} 
a& b^t \\
au+c & ub^t+A \\
\end{pmatrix}.$$
Then $D_2$ is a Lie algebra isomorphism from ${\mathfrak g}_T$ to ${\mathfrak g}_S$ and we claim
that $A':=ub^t+A$ is invertible. Indeed, if $x\in \Ker A$ then one has $Ax=-(b^tx)u$. Since $u\notin \Ran A$,
we have $b^tx=0$ and we get $D(0,x)=(b^tx,Ax)=(0,0)$ hence $x=0$. Thus we have reduced this case to
the preceding one.

(2) Let $(C_r)$ be a contraction from ${\mathfrak g}_T$ to ${\mathfrak g}_S$. As before, we can write
$$C_r=\begin{pmatrix} 
a_r& b_r^t \\
c_r & A_r  \\
\end{pmatrix}$$
where $a_r\in \RR$, $b_r,c_r\in M_{n1}(\RR)$ and $A_r\in M_n(\RR)$.

First, we assume that $A_r$ is invertible for each $r$. Then 
$$C_r':=DE(-A_r^{-1}c_r)=\begin{pmatrix} 
a_r-b_r^tA_r^{-1}c_r& b_r^t \\
0 & A_r \\
\end{pmatrix}$$
is also a contraction from ${\mathfrak g}_T$ to ${\mathfrak g}_S$ and one has
$a_r':=a_r-b_r^tA_r^{-1}c_r\not= 0$. Note that
$$C_r'^{-1}=\begin{pmatrix} 
a_r'^{-1}& -a_r'^{-1}b_r^tA_r^{-1} \\
0 & A_r^{-1} \\
\end{pmatrix}.$$
Then, by writing the equality
$$\lim_{r\to 0}C_r'^{-1}[C_r'(1,0),C_r'(0,v)]_T=[(1,0),(0,v)]_S$$
we immediately obtain  $\lim\limits_{r\to 0}a_r'A_r^{-1}TA_rv=Sv$ hence  $S\in \overline {C(T)\setminus (0)}$.

Finally, if the $A_r$ are not necessarily invertible then we can find a function $\varepsilon_r$ such that $C_r+\varepsilon_rI_{n+1}$
is also a contraction from ${\mathfrak g}_T$ to ${\mathfrak g}_S$ and that $A_r+\varepsilon_rI_n$ is invertible. Then this case
reduces to the preceding one.
\end{proof}

\begin{remark}
\normalfont
We note some facts that complement the above general result. 

\begin{itemize}
\item[{\bf (I)}]\label{I} It follows by Proposition~\ref{24dec2013} or directly 
that if $T_0\in\overline{\Sc(T)}$, then $\gg_T\con\gg_{T_0}$. 
Equivalently, $\overline{\Sc(T)}\subseteq\Ac(T)$.  
One finds in \cite[Th. 1.1]{BH79} a description of $\overline{\Sc(T)}$ 
in terms of the minimal polynomial of $T$ if all the eigenvalues of $T$ are real. 
\\ \emph{When do we have $\overline{\Sc(T)}=\Ac(T)$}?  
This is the case at least when $T$ is nilpotent; see {\bf (III)}. 
  
\item[{\bf (II)}] If $T^m=0$ for some $m\ge 1$ and $\gg_T\con\gg_{T_0}$, then also $T_0^m=0$. 
\\ This follows by \cite[Th. 1(14)]{NP06}, since $T^m=0$ if and only if 
$\gg_T$ is an $(m+1)$-step nilpotent Lie algebra, and similarly for $T_0$. 

\item[{\bf (III)}] If the operator $T$ is nilpotent, say $T^m=0$, then  
$$\gg_T\con\gg_{T_0}\iff(\forall j\in\{1,\dots,m\})\quad \rank T_0^j\le\rank T^j.$$
If the right-hand side of the above equivalence holds true, 
then \cite[Prop. 3.1]{BH79} (which holds true over $\RR$ as well) 
ensures that $T_0\in\overline{\Sc(T)}$, hence we may use {\bf (I)} above. 

Conversely, if $\gg_T\con\gg_{T_0}$, then \cite[Th. 1(4)]{NP06} implies that 
for every $j\ge 1$ we have $\dim\gg_{T_0}^{(j)}\le\dim\gg_T^{(j)}$, 
and then the conclusion follows by since $\gg_{T_0}^{(j)}=\Ran T_0^j$ 
and  $\gg_T^{(j)}=\Ran T^j$ (see \eqref{derived}). 

\item[{\bf (IV)}] If $T^n=0\ne T^{n-1}$ (that is, $T$ is a Jordan cell of size~$n$), 
then for every nilpotent $T_0$ we have $\gg_T\con\gg_{T_0}$. 
\\ This follows by {\bf (III)}, but it can be proved directly as follows. 
For every $d_1,\dots,d_n\in\RR\setminus\{0\}$ let $\diag(d_1,\dots,d_n)$ 
denote the diagonal matrix with these diagonal entries. 
Then it is easily checked that 
$$\diag(d_1,\dots,d_n)T\diag(d_1,\dots,d_n)^{-1}=
\begin{pmatrix}
0 & \frac{d_1}{d_2} &        & \raisebox{-5pt}{\text{\bf\huge 0}}\\
  &\ddots                &\ddots  & \\
  &                      &   \ddots     & \frac{d_{n-1}}{{d_n}} \\
\text{\bf\huge 0}  &                      &        & 0
\end{pmatrix}
 $$ 
where we wrote $T$ as an upper triangular Jordan cell. 
Now note that in the Jordan canonical form of the nilpotent operator $T_0$ 
 the diagonal situated just above the main diagonal is a sequence 
of $n-1$ entries that take the values  0 or 1, 
and $T_0$ is uniquely determined by the positions $j_1<\cdots<j_q$ of the entries 
equal to 0 in that sequence. 
If we now pick any integers $k_1\le\cdots\le k_n$ such that 
$k_j<k_{j+1}$ if and only if $j\in\{j_1,\dots,j_q\}$, 
then it follows by the above matrix computation 
that 
$$\lim\limits_{r\to 0}\diag(r^{k_1},\dots,r^{k_n})T\diag(r^{k_1},\dots,r^{k_n})=T_0$$ 
hence $T_0\in\overline{\Sc(T)}$, and then $\gg_T\con\gg_{T_0}$ by {\bf (I)}. 

\end{itemize}
\end{remark}







\bigskip

\end{document}